\theoremstyle{plain}
\newtheorem{theorem}{Theorem}[section]
\newtheorem{thm}[theorem]{Theorem}
\newtheorem{lemma}[theorem]{Lemma}
\newtheorem{prop}[theorem]{Proposition}
\newtheorem{defn}[theorem]{Definition}
\newtheorem{cor}[theorem]{Corollary}
\newcommand{\iso}{\cong}
\newcommand{\arr}{\rightarrow}
\newcommand{\Id}{\mathbbm{1}}
\newcommand{\incl}{\hookrightarrow}
\DeclareMathOperator{\End}{End}
\newcommand{\R}{\mathbb{R}}
\newcommand{\C}{\mathbb{C}}
\newcommand{\Z}{\mathbb{Z}}
\newcommand{\eps}{\epsilon}
\newcommand{\mcB}{\mathcal{B}}
\newcommand{\mcF}{\mathcal{F}}
\newcommand{\mcG}{\mathcal{G}}
\newcommand{\mcL}{\mathcal{L}}
\newcommand{\mcU}{\mathcal{U}}
\newcommand{\mfg}{\mathfrak{g}}
\newcommand{\mfh}{\mathfrak{h}}
\newcommand{\mfn}{\mathfrak{n}}
\DeclareMathOperator{\tr}{tr}
\DeclareMathOperator{\vspan}{span}
\newcommand{\pd}[2]{\frac{\partial#1}{\partial#2}}
\DeclareMathOperator{\mfsl}{\mathfrak{sl}}
\DeclareMathOperator{\mfu}{\mathfrak{u}}
\DeclareMathOperator{\mfb}{\mathfrak{b}}
\DeclareMathOperator{\mfa}{\mathfrak{a}}
\DeclareMathOperator{\mfs}{\mathfrak{s}}
\DeclareMathOperator{\Gr}{Gr}
\DeclareMathOperator{\Ad}{Ad}
\DeclareMathOperator{\ad}{ad}
\DeclareMathOperator{\Switch}{Switch}
\DeclareMathOperator{\gr}{gr}
\DeclareMathOperator{\IC}{IC}
\DeclareMathOperator{\mult}{mult}
\DeclareMathOperator{\ch}{ch}
\newcommand{\deltabar}{\bar{\partial}}
\newcommand{\boxbar}{\overline{\square}}
\DeclareMathOperator{\curv}{Curv}
\DeclareMathOperator{\degree}{deg}
\begin{document}

\title{A Brylinski filtration for affine Kac-Moody algebras}
\author{William Slofstra}
\begin{abstract}
    Braverman and Finkelberg have recently proposed a conjectural analogue of
    the geometric Satake isomorphism for untwisted affine Kac-Moody groups. As
    part of their model, they conjecture that (at dominant weights) Lusztig's
    $q$-analog of weight multiplicity is equal to the Poincare series of the
    principal nilpotent filtration of the weight space, as occurs in the
    finite-dimensional case. We show that the conjectured equality holds for
    all affine Kac-Moody algebras if the principal nilpotent filtration is
    replaced by the principal Heisenberg filtration. The main body of the proof
    is a Lie algebra cohomology vanishing result.  We also give an example to
    show that the Poincare series of the principal nilpotent filtration is not
    always equal to the $q$-analog of weight multiplicity.  Finally, we give
    some partial results for indefinite Kac-Moody algebras. 
\end{abstract}
\maketitle

\section{Introduction}

Let $\mcL(\lambda)$ be an integrable highest-weight representation of a
symmetrizable Kac-Moody algebra $\mfg$.  The Kostant partition functions
$K(\beta;q)$ are defined for weights $\beta$ by
\begin{equation*}
    \sum_{\beta} K(\beta;q) e^\beta = \prod_{\alpha \in \Delta^+} (1-q
        e^{\alpha})^{-\mult \alpha},
\end{equation*}
where $\Delta^+$ is the set of positive roots and $\mult \alpha = \dim
\mfg_{\alpha}$. The $q$-character of a weight space $\mcL(\lambda)_{\mu}$ is
the function
\begin{equation}\label{E:character}
    m^{\lambda}_{\mu}(q) = \sum_{w \in W} \eps(w) K(w * \lambda - \mu; q),
\end{equation}
where $W$ is the Weyl group of $\mfg$, $\eps$ is the usual sign representation
of $W$, and $w * \lambda = w(\lambda + \rho) - \rho$ is the shifted action of
$W$. The name ``$q$-character'' is used because $m^{\lambda}_{\mu}(1) = \dim
\mcL(\lambda)_{\mu}$.

When $\mfg$ is finite-dimensional it is well known that the $q$-analogs
$m^{\lambda}_{\mu}(q)$ are equal to Kostka-Foulkes polynomials, which express
the characters of highest-weight representations in terms of Hall-Littlewood
polynomials \cite{kato} \cite{gupta}, and are Kazhdan-Lusztig polynomials for
the affine Weyl group \cite{lusztig83}. When $\mu$ is dominant the coefficients
of $m^{\lambda}_{\mu}(q)$ are non-negative. There is an explanation for this
phenonemon, first conjectured by Lusztig \cite{lusztig83}: the weight
space $\mcL(\lambda)_{\mu}$ has an increasing filtration ${}^e F^*$ such that
$m^{\lambda}_{\mu}(q)$ is equal to the Poincare polynomial
\begin{equation}\label{E:poincare}
    {}^e P^{\lambda}_{\mu}(q) = \sum_{i \geq 0} q^i \dim
        {}^e F^i \mcL(\lambda)_{\mu} \ /\  {}^e F^{i-1} \mcL(\lambda)_{\mu} 
\end{equation}
of the associated graded space. This identity was first proved by Brylinski for
$\mu$ regular or $\mfg$ of classical type; the filtration ${}^e F^*$ is known
as the \emph{Brylinski} or \emph{Brylinski-Kostant} filtration, and is defined
by 
\begin{equation*}
    {}^e F^i(\mcL(\lambda)_{\mu}) = \{ v \in \mcL(\lambda)_{\mu} : e^{i+1} v = 0\}, 
\end{equation*}
where $e$ is a principal nilpotent.  Brylinski's proof was extended to all
dominant weights by Broer \cite{broer}.  More recently Joseph, Letzter, and
Zelikson gave a purely algebraic proof of the identity $m^{\lambda}_{\mu} =
{}^e P^{\lambda}_{\mu}$, and determined ${}^e P^{\lambda}_{\mu}$ for $\mu$
non-dominant \cite{jlz}. Viswanath has shown that the $q$-analogs of weight
multiplicity of an arbitrary symmetrizable Kac-Moody are Kostka-Foulkes
polynomials for generalized Hall-Littlewood polynomials, and determined
$m^{\lambda}_{\mu}(q)$ at some simple $\mu$ for an untwisted affine Kac-Moody
\cite{vis}.

The point of this paper is to extend Brylinski's result to affine (i.e.
indecomposable of affine type) Kac-Moody algebras.  We show that, as in the
finite-dimensional case, there is a filtration on $\mcL(\lambda)_{\mu}$ such
that when $\mu$ is dominant, $m^{\lambda}_{\mu}(q)$ is equal to the Poincare
series of the associated graded space. Unlike the finite-dimensional case, the
principal nilpotent is not sufficient to define the filtration in the affine
case; instead, we use the positive part of the principal Heisenberg
(this form of Brylinski's identity was first conjectured by Teleman).
Brylinski's original proof of the identity $m^{\lambda}_{\mu} ={}^e
P^{\lambda}_{\mu}$ uses a cohomology vanishing result for the flag variety. Our
proof is based on the same idea, but uses the Lie algebra cohomology approach
of \cite{fgt}. In particular we prove a vanishing result for Lie algebra
cohomology by calculating the Laplacian with respect to a Kahler metric.
Although we concentrate on the affine case for simplicity, our results
generalize easily to the case when $\mfg$ is a direct sum of algebras of finite
or affine type.  There are two difficulties in extending this result to
indefinite symmetrizable Kac-Moody algebras: there does not seem to be a simple
analogue of the Brylinski filtration, and the cohomology vanishing result does
not extend for all dominant weights $\mu$. We can overcome these difficulties
by replacing the Brylinski filtration with an intermediate filtration, and by
requiring that the root $\lambda -\mu$ has affine support. Thus we get some
partial non-negativity results for the coefficients of $m^{\lambda}_{\mu}(q)$
even when $\mfg$ is of indefinite type.

The primary motivation for this paper is a recent conjecture of Braverman and
Finkelberg. Recall that when $\mfg$ is finite-dimensional, the geometric Satake
isomorphism is an equivalence between the representation category of any group
$G$ associated to $\mfg$, and the category of equivariant perverse sheaves on
the loop Grassmannian $\Gr = G^{\vee}((z)) / G^{\vee}[[z]]$ of the Langlands
dual group $G^{\vee}$. The loop Grassmannian $\Gr$ is an ind-variety, realized
as an increasing disjoint union of Schubert varieties $\Gr^{\lambda}$
parametrized by weights of $G$. Under the equivalence, a highest-weight
representation $\mcL(\lambda)$ is sent to the intersection cohomology complex
$\IC^{\lambda}$ of $\overline{\Gr^{\lambda}}$. In addition to conjecturing the
equality $m^{\lambda}_{\mu} = {}^e P^{\lambda}_{\mu}$, Lusztig showed in
\cite{lusztig83} that $m^{\lambda}_{\mu}(q)$ is equal (after a degree shift) to
the generating function $\IC^{\lambda}_{\mu}(q)$ for the dimensions of the
stalk of the complex $\IC^{\lambda}_{\mu}$ at a point in $\Gr^{\mu} \subset
\overline{\Gr^{\lambda}}$. A direct isomorphism between the stalks
$\IC^{\lambda}_{\mu}$ and the graded spaces $\gr \mcL(\lambda)_{\mu}$ appears
in the geometric Satake isomorphism \cite{ginzburg} \cite{mv}, leading to
another proof that $m^{\lambda}_{\mu} = {}^e P^{\lambda}_{\mu}$ (see
\cite{ginzburg} in particular). Braverman and Finkelberg have proposed a
conjectural analogue of the geometric Satake isomorphism for affine Kac-Moody
groups \cite{bf}. Their conjecture relates representations of $\mfg$ to
perverse sheaves on an analogue of the loop Grassmannian for $\mfg^{\vee}$ when
$\mfg^{\vee}$ is an untwisted affine Kac-Moody. Their model leads them to
conjecture that $m^{\lambda}_{\mu}(q) = {}^e P^{\lambda}_{\mu}$ in the affine
case, with both related to the intersection cohomology stalks as in the
finite-dimensional case.\footnote{There seems to be a typo in \cite{bf}: root
multiplicities are omitted in the definition of the Kostant partition
functions.} Since we will demonstrate by example that $m^{\lambda}_{\mu}(q)$ is
not necessarily equal to ${}^e P^{\lambda}_{\mu}$, our paper gives a correction
of Braverman and Finkelberg's conjecture. 

\subsection{Acknowledgements} I thank my advisor, Constantin Teleman, for
suggesting the project and for many helpful conversations.  This work was
supported in part by NSERC. Additional support was received from NSF grants
DMS-1007255 and DMS-0709448.

\subsection{Organization} The definition of the Brylinski filtration and the
statements of the main results for affine Kac-Moody algebras are given in
Section \ref{S:bryldef}. Proofs follow in Sections \ref{S:reduction} and
\ref{S:cohvanish}. Partial results for indefinite Kac-Moody algebras are given
in Section \ref{S:indefinite}.

\subsection{Notation and terminology}\label{SS:notation}

Throughout, $\mfg$ will refer to a symmetrizable Kac-Moody algebra. For
standard notation and terminology, we mostly follow \cite{kumar}. We assume a
fixed presentation of $\mfg$, from which we get a choice of Cartan $\mfh$,
simple roots $\{\alpha_i\}$, simple coroots $\{\alpha_i^{\vee}\}$, and
Chevalley generators $\{e_i,f_i\}$. We can then grade $\mfg$ via the principal
grading, ie. by assigning degree $1$ to each $e_i$ and degree $-1$ to each
$f_i$. By choosing a real form $\mfh_{\R}$ of $\mfh$ we get an anti-linear
Cartan involution $x \mapsto \overline{x}$, defined as the anti-linear
involution sending $e_i \mapsto - f_i$ for all $i$ and $h \mapsto -h$ for all
$h \in \mfh_{\R}$. As usual $\mfg$ has the triangular decomposition $\mfg =
\overline{\mfn} \oplus \mfh \oplus \mfn$, where $\mfn$ is the standard
nilpotent $\bigoplus_{n >0} \mfg_n$. The standard Borel is the subalgebra $\mfb
= \mfh \oplus \mfn$.  Associated to $\mfn$ and $\mfb$ are the pro-algebras
$\hat{\mfn} = \lim_{\leftarrow} \mfn / \mfn_k$ and $\hat{\mfb} =
\lim_{\leftarrow} \mfb / \mfn_k$, where $\mfn_k = \bigoplus_{n > k} \mfg_n$.

The dual $\hat{\mfn}^*$ of a pro-algebra will refer to the continuous dual with
respect to the inverse limit topology. Note that as a vector space
$\hat{\mfn}^*$ is isomorphic to $\bigoplus_{n>0} \mfg_n^*$. If $V$ is a
$\hat{\mfb}$-module then $H^*_{cts}(\hat{\mfb},\mfh; V)$ will denote the
relative continuous cohomology of $(\hat{\mfb},\mfh)$ with coefficients in $V$.
Continuous cohomology is defined analogously to the ordinary cohomology, but
using continuous cochains in the Koszul complex. For complete details on
continuous cohomology we refer to \cite{Fu86}.

\section{The Brylinski filtration for affine Kac-Moody algebras}\label{S:bryldef}

A principal nilpotent (with respect to a given presentation) of a symmetrizable
Kac-Moody algebra is an element $e \in \mfg_1$ of the form $e = \sum c_i e_i$,
where $c_i \in \C \setminus \{0\}$ for all simple roots $e_i$. If $\mfg$ is
affine it is well known that the algebras $\mfs_e = \{ x \in \mfg : [x,e] \in
Z(\mfg) \}$ are Heisenberg algebras, and these algebras are called
principal Heisenberg subalgebras.

\begin{defn}
    Let $\mcL(\lambda)$ be a highest-weight module of an affine Kac-Moody
    algebra $\mfg$. Define the \emph{Brylinski filtration} with respect to the
    principal Heisenberg $\mfs$ by
    \begin{equation*}
        {}^{\mfs} F^i \mcL(\lambda)_{\mu} = \{v \in \mcL(\lambda)_{\mu} :
            x^{i+1} v = 0 \text{ for all } x \in \mfs \cap \mfn \}.
    \end{equation*} 
    Let ${}^{\mfs} P^{\lambda}_{\mu}(q)$ be the Poincare series of the
    associated graded space of $\mcL(\lambda)_{\mu}$. 
\end{defn}
Note that the principal nilpotents form a single $H$-orbit, so the filtration
${}^{\mfs} F^*$ is independent of the choice of principal Heisenberg. 

Recall that a weight $\mu$ is real-valued if $\mu(h) \in \R$ for all $h \in
\mfh_{\R}$, and dominant if $\mu(\alpha_i^{\vee}) \geq 0$ for all simple
coroots $\alpha_i^{\vee}$. 
\begin{theorem}\label{T:brylpoincare}
    Let $\mcL(\lambda)$ be an integrable highest weight representation of an
    affine Kac-Moody algebra $\mfg$, where $\lambda$ is a real-valued dominant
    weight.  If $\mu$ is a dominant weight of $\mcL(\lambda)$ then
    ${}^{\mfs} P^{\lambda}_{\mu}(q) = m^{\lambda}_{\mu}(q)$.  
\end{theorem}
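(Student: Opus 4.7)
The plan is to carry out the program outlined in the introduction: realize $m^\lambda_\mu(q)$ and ${}^{\mfs}P^\lambda_\mu(q)$ as two computations of the same bigraded Lie algebra cohomology, reducing the identity to a vanishing of higher cohomology, and then establish that vanishing by a K\"ahler-Laplacian calculation in the style of \cite{fgt}.

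First I would set up the cohomological description of the $q$-character. Garland-Lepowsky's affine analogue of Kostant's theorem describes $H^*(\hat{\mfn},\mcL(\lambda))$, and the principal grading on $\mfg$ endows this cohomology with a natural extra $\Z$-grading. Comparing the bigraded Euler characteristic at the $\mu$-weight with the Chevalley-Eilenberg cochain description recovers $m^\lambda_\mu(q)$: the sign $\eps(w)$ appears as cohomological parity, while the Kostant partition function $K(w*\lambda-\mu;q)$ encodes the principal-graded dimensions of the relevant exterior algebra.

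Next I would identify the Brylinski filtration cohomologically. Write $\hat{\mfn} = \mfs^+ \oplus \mathfrak{q}$ as an $\mfh$-stable direct sum, with $\mfs^+ = \mfs \cap \mfn$ and $\mathfrak{q}$ a chosen complement. Since $\mfs^+$ is abelian, its Chevalley-Eilenberg complex is a Koszul complex, and the Brylinski filtration ${}^{\mfs}F^* \mcL(\lambda)_\mu$ is exactly the polynomial-degree filtration on $H^0(\mfs^+,\mcL(\lambda))_\mu$. Filtering the full Chevalley-Eilenberg complex of $\hat{\mfn}$ by $\mfs^+$-degree produces a spectral sequence whose $E_1$-page realizes the Brylinski Poincar\'e series at the $\mu$-weight, and whose abutment is the cohomology computed in the previous step. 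The target identity then reduces to collapse at $E_1$, equivalently to the vanishing of the higher Chevalley-Eilenberg cohomology of $\mathfrak{q}$ with coefficients in a suitable graded quotient of $\mcL(\lambda)$, in each bidegree involving the $\mu$-weight piece.

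I would then prove this vanishing by Hodge-theoretic means. The invariant bilinear form on $\mfg$ together with the Cartan involution equips $\mathfrak{q}$ with a Hermitian structure, and the principal grading provides a height function; together these assemble a K\"ahler-type metric on the Chevalley-Eilenberg complex. A direct Laplacian computation in the spirit of \cite{fgt} expresses the Laplacian on bidegree $(p,q)$ forms in terms of the Casimir and Cartan-positive quantities depending on $\lambda$ and $\mu$, and dominance of $\mu$ forces strict positivity outside degree zero. The main obstacle is precisely this last step in the pro-nilpotent infinite-dimensional setting: the Laplacian involves infinite sums indexed by positive roots, so one must verify convergence weight by weight and check that the classical Kostant-style positivity survives in the affine context. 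Dominance of $\mu$ and real-valuedness of $\lambda$ are essential for that positivity, consistent with the failure of the analogous estimate, and of the full theorem, in the indefinite case treated in Section~\ref{S:indefinite}.
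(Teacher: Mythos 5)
Your overall strategy---reduce the identity to a Lie algebra cohomology vanishing and prove that vanishing by a K\"ahler/Nakano Laplacian computation---is the paper's, but two of your reductions have genuine gaps. First, your cohomological description of $m^{\lambda}_{\mu}(q)$ is incorrect as stated: the variable $q$ in $K(\beta;q)$ counts the \emph{number of positive roots} (with multiplicity) in a partition of $\beta$, i.e.\ it is the symmetric degree in $S^*\hat{\mfn}^*$, not the principal (height) grading. Already for $\mathfrak{sl}_3$ with $\lambda$ the highest root and $\mu=0$ one has $m^{\lambda}_{0}(q)=q+q^2$, whereas the height-graded count of partitions of $\alpha_1+\alpha_2$ gives $2q^2$; moreover $H^*(\mfn,\mcL(\lambda))_{\mu}$ itself vanishes at such $\mu$, so no regrading of Garland--Lepowsky cohomology at the $\mu$-weight can produce $m^{\lambda}_{\mu}(q)$. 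The paper instead takes coefficients $\mcL(\lambda)\otimes S^*\hat{\mfn}^*\otimes\C_{-\mu}$ in relative $(\hat{\mfb},\mfh)$-cohomology, lets $q$ track symmetric degree, and recovers $m^{\lambda}_{\mu}(q)$ from the Euler characteristic together with the Weyl--Kac character formula.

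Second, and more seriously, your identification of ${}^{\mfs}P^{\lambda}_{\mu}$ with an $E_1$-page uses only that $\mfs^+=\mfs\cap\mfn$ is abelian and graded; note also that $\mfs^+$ is not $\mfh$-stable (e.g.\ $[H,E+Fz]\notin\C(E+Fz)$ in $\widehat{\mathfrak{sl}_2}$), so the posited $\mfh$-stable splitting $\hat{\mfn}=\mfs^+\oplus\mathfrak{q}$ does not exist, $\mathfrak{q}$ need not be a subalgebra, and the filtration you describe is not visibly by subcomplexes. Crucially, the properties you do invoke are shared by the line $\C e$ through a principal nilpotent, for which the statement is \emph{false}: in the paper's example $\lambda=(0,1,0)$, $\mu=(0,1,-2)$ one has ${}^{e}P^{\lambda}_{\mu}=q+q^4\neq q^2+q^4=m^{\lambda}_{\mu}(q)$. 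So any correct argument must use an ingredient distinguishing the principal Heisenberg, and that ingredient is absent from your outline. In the paper it is the density of $\Ad(\mcB)(\mfs\cap\mfn)$ in $\hat{\mfn}$ (together with abelianness and gradedness of $\mfs\cap\mfn$, and the embedding $\mcL(\lambda)\incl\C[\mcU]\otimes\C_{\lambda}$), which is what identifies ${}^{\mfs}F$ with the polynomial-degree filtration on $(\mcL(\lambda)\otimes\C_{-\mu}\otimes\C[\mcU])^{\mcB}$ (Proposition \ref{P:heisenberg}); the vanishing theorem then identifies the associated graded space with $H^0_{cts}(\hat{\mfb},\mfh;\mcL(\lambda)\otimes S^*\hat{\mfn}^*\otimes\C_{-\mu})$. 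Finally, in the vanishing step dominance of $\mu$ yields only \emph{semi}-positivity of the curvature terms; the paper must further analyze the kernel (restricting to the subdiagram $Y$ of simple roots on which $\mu$ vanishes, using that $\mu$ has positive level so that $\mfg(A_Y)$ is finite-dimensional and $\mfn_Y$ has a dense $\mcB_Y$-orbit) to force harmonic cocycles into degree zero, so ``strict positivity outside degree zero'' cannot simply be asserted.
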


As in Subsection \ref{SS:notation}, $\hat{\mfn}^*$ denotes the continuous dual
and $H^*_{cts}$ denotes continuous Lie algebra cohomology.  The proof of
Theorem \ref{T:brylpoincare} depends on 
\begin{thm}\label{T:brylkm} Let $\mcL(\lambda)$ be an integrable highest weight
    representation of an affine Kac-Moody algebra $\mfg$, where $\lambda$ is a
    real-valued dominant weight. Let $V = \mcL(\lambda) \otimes S^* \hat{\mfn}^*
    \otimes \C_{-\mu}$, where $\mu$ is a dominant weight of $\mcL(\lambda)$.
    Then $H^d_{cts}(\hat{\mfb},\mfh; V) = 0$ for $d>0$, and in addition there
    is a graded isomorphism $\gr \mcL(\lambda)_{\mu} \iso
    H^0_{cts}(\hat{\mfb},\mfh; V)$, where the latter space is graded by
    symmetric degree.  
\end{thm}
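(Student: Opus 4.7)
My plan is to prove Theorem~\ref{T:brylkm} by Hodge-theoretic methods, adapting the Laplacian calculations used in \cite{fgt}. The first step is to realize the cohomology via the continuous relative Chevalley-Eilenberg complex
\[
C^q = \bigl(\Lambda^q \hat{\mfn}^* \otimes \mcL(\lambda) \otimes S^*\hat{\mfn}^* \otimes \C_{-\mu}\bigr)^{\mfh},
\]
with the standard differential $d$. Because $\mcL(\lambda)$ has finite-dimensional weight spaces and both $\hat{\mfn}$ and $S^*\hat{\mfn}^*$ are principally graded with finite-dimensional pieces, the $\mfh$-invariance condition decomposes each $C^q$ into a countable direct sum of finite-dimensional subspaces.

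Next, I would equip each finite-dimensional piece with a positive-definite Hermitian inner product, using the contravariant form on $\mcL(\lambda)$ coming from the Cartan involution, together with the dual of a fixed invariant bilinear form on $\mfg$ restricted to $\hat{\mfn}$. Let $d^*$ denote the formal adjoint of $d$ under this inner product and set $\Delta = dd^* + d^*d$. The central computation, in the spirit of Kostant's Laplacian formula as generalized in \cite{fgt}, will express $\Delta$ on each piece as a difference of Casimir-type operators; the $-\mu$ twist combined with $\mfh$-invariance should produce a scalar of the form $|\lambda+\rho|^2 - |\nu+\rho|^2$ (relative to the invariant form), where $\nu$ tracks the weight through the various tensor factors.

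The dominance of $\lambda$ and $\mu$ should then force this scalar to be strictly positive on every component of $C^q$ for $q>0$, and Hodge theory on each finite-dimensional piece yields $H^q_{cts}(\hat{\mfb},\mfh; V) = 0$ for $q>0$. In degree zero, cocycles are the $\mfn$-invariant, $\mfh$-equivariant weight-$\mu$ elements of $\mcL(\lambda) \otimes S^*\hat{\mfn}^*$; viewing such an element of symmetric degree $i$ as an equivariant polynomial map $\hat{\mfn} \to \mcL(\lambda)$ and evaluating at a principal Heisenberg element should identify $H^0$ with $\gr \mcL(\lambda)_\mu$ in a manner matching the symmetric degree with the Brylinski filtration index. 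The principal obstacle I expect is the Laplacian computation and the ensuing positivity argument: both must be carried out in the pro-algebraic, affine setting with imaginary roots present, the principal Heisenberg (rather than a principal nilpotent) must drive the degree-zero identification, and the adjoint $d^*$ must be shown to be well defined on the infinite-dimensional coefficient module while the Casimir-difference formula remains sharp enough to force strict positivity on every weight component.
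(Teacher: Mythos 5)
The central gap is in your Laplacian step. A Kostant-style formula expressing the Laplacian as a difference of Casimir-type operators, with eigenvalue $|\lambda+\rho|^2-|\nu+\rho|^2$, requires the coefficient module to be a $\mfg$-module so that the Casimir acts on it; here the coefficients $\mcL(\lambda)\otimes S^*\hat{\mfn}^*\otimes\C_{-\mu}$ are only a $\hat{\mfb}$-module ($S^*\hat{\mfn}^*$ and $\C_{-\mu}$ carry no $\mfg$-action), so no such scalar formula is available. The paper's route is genuinely different: it works with the semi-infinite complex and Nakano's identity $\boxbar = \square + \degree + \curv$, where the curvature terms measure precisely the failure of the $\hat{\mfb}$-action to extend to $\mfg$ (so $\curv_{\mcL(\lambda)}=0$ while $\curv_{S^*}$ and $\curv_{\C_{-\mu}}$ must be analyzed by hand, via the Kahler metric coming from the semi-infinite cocycle). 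Moreover, your expectation that dominance of $\lambda$ and $\mu$ forces strict positivity on every component in degree $q>0$ is false: when $\mu$ lies on walls, $\curv_{\C_{-\mu}}$ has a large kernel, and the paper must study harmonic cocycles supported on $\bigwedge^*\hat{\mfn}_Y^*$ for $Y=\{\alpha_i:\mu(\alpha_i^{\vee})=0\}$, using that $\mu$ has positive level and $\mfg$ is affine (so $\mfg(A_Y)$ is finite-dimensional) together with a dense $\mcB_Y$-orbit in $\mfn_Y$; likewise the semi-positivity of $\curv_{S^*}+\degree$ uses the loop realization and $\ad^t(c)=0$. Any argument that never uses affineness in an essential way cannot be correct, since the vanishing fails for general symmetrizable $\mfg$ (Section \ref{S:indefinite} only recovers it under affine-support hypotheses).

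Your degree-zero identification is also too thin. The isomorphism $\gr\mcL(\lambda)_{\mu}\iso H^0_{cts}(\hat{\mfb},\mfh;V)$, graded compatibly with the Brylinski filtration, is not obtained by simply evaluating invariant polynomial maps at a Heisenberg element. In the paper it requires: (i) the isomorphism $\mcL(\lambda)_{\mu}\iso(\mcL(\lambda)\otimes\C_{-\mu}\otimes\C[\mcU])^{\mcB}$ and its filtration by polynomial degree, (ii) an exact-sequence argument with $0\arr W\otimes\mcF^{p-1}\arr W\otimes\mcF^{p}\arr W\otimes S^p\hat{\mfn}^*\arr 0$, in which the already-established $H^1$-vanishing is exactly what identifies $H^0$ of the $p$-th symmetric piece with the $p$-th graded piece of the invariants (taking invariants does not commute with $\gr$ for free), and (iii) Proposition \ref{P:heisenberg}, identifying the degree filtration with the Heisenberg filtration, which needs the density of $\Ad(\mcB)(\mfs\cap\mfn)$ in $\hat{\mfn}$ (proved from $\mfn=(\mfs_e\cap\mfn)+[\mfb,e]$, again an affine fact) and an embedding $\mcL(\lambda)\incl\C[\mcU]\otimes\C_{\lambda}$. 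Your sketch gestures at the right objects but supplies none of these steps, and without (ii) in particular the claimed grading match is unjustified.
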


\begin{proof}[Proof of Theorem \ref{T:brylpoincare} from Theorem \ref{T:brylkm}]

Let $V^p = \mcL(\lambda) \otimes S^p \hat{\mfn}^* \otimes \C_{-\mu}$.  By
Theorem \ref{T:brylkm}, $P^{\lambda}_{\mu}(q) = \sum_{p \geq 0} \dim
H^0_{cts}(\hat{\mfb},\mfh; V^p) q^p = \sum \chi(\hat{\mfb},\mfh; V^p) q^p$,
where $\chi$ is the Euler characteristic (the second equality follows from
cohomology vanishing). Since $\hat{\mfn}^*$ has finite-dimensional weight
spaces and all weights belong to the negative root cone, $\bigwedge^*
\hat{\mfn}^* \otimes \mcL(\lambda) \otimes S^p \hat{\mfn}^*$ has
finite-dimensional weight spaces. Thus we can write
\begin{align*}
    \sum_{p \geq 0} \chi(\hat{\mfb},\mfh; V^p) q^p & = \sum_{p,k \geq 0} (-1)^k
        q^p \dim \left(\bigwedge^k \hat{\mfn}^* \otimes V^p\right)^{\mfh} \\
            & =[e^{\mu}] \ch \mcL(\lambda) \prod_{\alpha \in \Delta^+}
                (1-e^{-\alpha})^{\mult \alpha} (1 - q e^{-\alpha})^{-\mult\alpha}.
\end{align*}
Applying the Weyl-Kac character formula 
\begin{equation*}
    \ch \mcL(\lambda) = \sum_{w \in W} \eps(w) e^{w * \lambda} \cdot
        \prod_{\alpha \in \Delta^+} (1 - e^{-\alpha})^{- \mult \alpha}
\end{equation*}
we get the result.

\end{proof}
The proof of Theorem \ref{T:brylkm} will be given in Sections \ref{S:reduction}
and \ref{S:cohvanish}. If $\mfg = \bigoplus \mfg_i$ is a direct sum of
indecomposables of finite and affine type, the conclusions of Theorems
\ref{T:brylpoincare} and \ref{T:brylkm} remain true with $\mfs$ replaced by a
direct sum of principal nilpotents (for the finite components) and principal
Heisenbergs (for the affine components). 

\subsection{Examples}

We now give some elementary examples to show that ${}^{\mfs} F$ is different
from ${}^e F$. Consider $\widehat{\mfsl_2}$, the affine Kac-Moody algebra
realized as $\mfsl_2[z^{\pm 1}] \oplus \C c \oplus \C d$, where $c$ is a
central element, and $d$ is the derivation $\pd{}{z}$. Let $\{H,E,F\}$ be an
$\mfsl_2$-triple in $\mfsl_2$, and take principal nilpotent $e = E + Fz$. The
principal Heisenberg $\mfs$ is spanned by the elements $e z^n$, $n \in \Z$,
along with $c$.

The Cartan subalgebra of $\widehat{\mfsl_2}$ is $\vspan \{H,c,d\}$. Denote a
weight $\alpha H^* + h c^* + n d^*$ by $(\alpha,h,n)$. The weight $\lambda =
(\alpha,h,n)$ is dominant if $0 \leq \alpha \leq h$, and the corresponding
irreducible highest-weight representation $L(\lambda)$ can be realized as the
quotient of the Verma module $U(\mfg) \otimes_{U(\mfb)} \C_{\lambda}$ by the
$U(\mfg)$-submodule generated by $F^{\alpha+1} \otimes 1$ and $(E z^{-1})^{h -
\alpha + 1} \otimes 1$. Let
\begin{equation*}
    w = (F z^{-1}) (E z^{-1}) v,
\end{equation*}
where $v$ is the highest weight vector in $L(c^*)$. Note that $w$ is a weight
vector of weight $(0,1,-2)$. It is easy to check, using the defining relations
for $L(c^*)$, that $e^2 w = 0$, while $(e z) e w = 3 v$, so $w \in {}^e F^2$
but is not in ${}^{\mfs} F^2$.

The same idea can be used to calculate Poincare series. For the above example,
where $\lambda = (0,1,0)$ and $\mu = (0,1,-2)$, we have $\dim
\mcL(\lambda)_{\mu} = 2$. The Poincare series for ${}^e F$ is $q + q^4$, while
the Poincare series for ${}^{\mfs} F$ is $m^{\lambda}_{\mu}(q) =  q^2 + q^4$.
For an example with a dominant regular weight, let $\lambda = (0,3,0)$ and
$\mu = (2,3,-3)$. The Poincare series of ${}^e F$ is $q + 2q^2 + q^3 + q^5$,
while $m^{\lambda}_{\mu}(q) = q + q^2 + 2q^3 + q^5$.

\section{Reduction to cohomology vanishing}\label{S:reduction}

In this section we introduce an equivalent filtration to the Brylinski
filtration, which will allow us to reduce Theorem \ref{T:brylkm} to a
cohomology vanishing statement. The line of argument is inspired by \cite{bryl}
and \cite{fgt}. As usual, $\mfg$ will be an arbitrary symmetrizable Kac-Moody
algebra except where stated.

Associated to $\mfg$ is a Kac-Moody group $\mcG$. The standard Borel subgroup
$\mcB$ of $\mcG$ is a solvable pro-group with Lie algebra $\hat{\mfb}$. The
standard unipotent subgroup $\mcU \subset \mcB$ is a unipotent pro-group with
Lie algebra $\hat{\mfn}$. The Borel $\mcB$ also contains a torus $H$ corresponding
to $\mfh$. Defining the new filtration requires two lemmas. 
\begin{lemma}\label{L:affinestruct}
    There are algebraic isomorphisms $\mcU \iso \mcB / H \iso \hat{\mfn}$
    giving $\mcU$ the structure of a linear space with an affine $\mcB$-action.
\end{lemma}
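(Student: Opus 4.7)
The plan is to verify all three isomorphisms at the level of finite-dimensional quotients and then pass to the inverse limit. For each $k \geq 0$, let $\mcU_k$ and $\mcB_k$ denote the algebraic groups with Lie algebras $\mfn/\mfn_k$ and $\mfb/\mfn_k$ respectively, so that by construction $\mcU = \varprojlim \mcU_k$ and $\mcB = \varprojlim \mcB_k$. Each $\mcU_k$ is a finite-dimensional unipotent group, and the triangular decomposition gives $\mcB_k = H \ltimes \mcU_k$.

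First I would establish $\mcU \iso \hat{\mfn}$. Since $\mfn/\mfn_k$ is finite-dimensional and nilpotent (the principal grading ensures that $k$-fold brackets land in $\mfn_k$), the Baker--Campbell--Hausdorff series truncates to a polynomial, and $\exp_k \colon \mfn/\mfn_k \to \mcU_k$ is an isomorphism of affine algebraic varieties with polynomial inverse. The maps $\exp_k$ are compatible with the quotient projections $\mfn/\mfn_{k+1} \twoheadrightarrow \mfn/\mfn_k$ and $\mcU_{k+1} \twoheadrightarrow \mcU_k$, so in the limit they assemble into an isomorphism $\exp \colon \hat{\mfn} \iso \mcU$ of affine pro-varieties; this endows $\mcU$ with an affine pro-variety structure. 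For $\mcU \iso \mcB/H$, at each finite level the semidirect decomposition $\mcB_k = \mcU_k \rtimes H$ exhibits $\mcU_k$ as a section of the quotient $\mcB_k \to \mcB_k/H$, giving an algebraic isomorphism $\mcU_k \iso \mcB_k/H$; these are compatible with the projections, so taking the inverse limit gives $\mcU \iso \mcB/H$.

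Finally, for affineness of the $\mcB$-action: left multiplication on $\mcB/H$, transported to $\hat{\mfn}$ via the identifications above, sends $\xi \in \mfn/\mfn_k$ under $b = u_0 h_0$ to $\log_k\!\bigl(u_0 \cdot \exp_k(\Ad(h_0)\xi)\bigr)$. At the $k$-th level this is a morphism because $\Ad(h_0)$ acts linearly on the finite-dimensional space $\mfn/\mfn_k$, the maps $\exp_k$ and $\log_k$ are polynomial, and multiplication in $\mcU_k$ is polynomial by BCH. Compatibility of these morphisms with the inverse system is immediate, so the limit defines a pro-algebraic action $\mcB \times \mcU \to \mcU$.

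The main obstacle is essentially one of bookkeeping: one must check that BCH is genuinely polynomial at each level (guaranteed by nilpotence of $\mfn/\mfn_k$) and that every map constructed is compatible with the pro-structure. I do not anticipate a substantive technical difficulty beyond care with the inverse systems.
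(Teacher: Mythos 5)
Your construction of the isomorphisms $\mcU \iso \mcB/H \iso \hat{\mfn}$ via truncated exponentials is fine as far as it goes, but it misses the actual content of the lemma: ``affine $\mcB$-action'' here does not mean merely a pro-algebraic action on an affine pro-variety, it means that the action transported to $\hat{\mfn}$ is by \emph{affine-linear} transformations $\xi \mapsto A(b)\xi + c(b)$ (with linear part $A(b)=\Ad(b)$). Under your exponential identification this fails: transporting left translation gives $\xi \mapsto \log_k\bigl(u_0\exp_k(\Ad(h_0)\xi)\bigr)$, and by Baker--Campbell--Hausdorff this map contains terms such as $-\tfrac{1}{12}\bigl[\xi,[\log u_0,\xi]\bigr]$ which are quadratic (and higher) in $\xi$ as soon as $\mfn/\mfn_k$ has nilpotency class at least three --- which it does for $k\geq 3$, since $\mfn$ is generated by $\mfg_1$ and $\mfg_3\neq 0$. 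So your identification produces a polynomial action, not an affine one, and passing to the inverse limit cannot repair this.

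The affineness is precisely what the rest of the paper leans on: it is what makes the filtration of $\C[\mcU]$ by polynomial degree $\mcB$-stable with $\gr \C[\mcU] \iso S^*\hat{\mfn}^*$ as $\mcB$-modules (the graded action being the linear part $\Ad$), and it is used again in the proof of Proposition \ref{P:heisenberg}, where the image under $b$ of the line $t\mapsto \pi(e^{t\tilde{s}})$ is the \emph{parallel} line $\Ad(b)\pi(e^{t\tilde{s}})+\pi(b)$. The paper achieves this with a different identification: fix $\delta\in\mfh$ acting on $\mfg_n$ by $n$ and define $\pi(b)\in\hat{\mfn}$ by $\Ad(b)\delta = \delta + \pi(b)$. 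Then $\pi$ is right $H$-invariant, its restriction to $\mcU$ is an isomorphism onto $\hat{\mfn}$, and the cocycle identity $\pi(b_1b_2)=\Ad(b_1)\pi(b_2)+\pi(b_1)$ shows the induced $\mcB$-action on $\hat{\mfn}$ is affine with linear part $\Ad(b)$. To fix your argument you would need to replace exponential coordinates by this (or some other linearizing) identification; as written, the key assertion of the lemma is not established.
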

\begin{proof}
    Note that the spaces in question can be naturally expressed as inverse
    limits of affine schemes, and hence are affine schemes in their own right.
    Pick $\delta \in \mfh$ acting on $\mfg_n$ as multiplication by $n$, and
    define $\pi : \mcB \arr \hat{\mfn}$ by $\Ad(b) \delta = \delta + \pi(b)$.
    Then the composition $\mcU \incl \mcB \arr \mcB / H \arr \hat{\mfn}$ is an
    isomorphism. $\hat{\mfn}$ has a linear structure, while $\mcB / H$ has a
    left-translation action of $\mcB$. If $b_1,b_2 \in \mcB$ then $\Ad(b_1 b_2)
    \delta = \Ad(b_1) (\delta + \pi(b_2)) = \delta + \pi(b_1) + \Ad(b_1)
    \pi(b_2)$, so $\pi(b_1 b_2) = \Ad(b_1) \pi(b_2) + \pi(b_1)$ and the
    resulting action of $\mcB$ on $\hat{\mfn}$ is affine. 
\end{proof}
\begin{lemma}\label{L:filteredinvariants}
    Let $V$ be a pro-representation of $\mcB$. Then evaluation at the identity
    gives an isomorphism $(V \otimes \C[\mcU])^{\mcB} \arr V^H$.
\end{lemma}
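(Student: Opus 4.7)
The plan is to construct an explicit inverse to $\ev_e$ using the action of $\mcB$ on $\mcU$ provided by Lemma \ref{L:affinestruct}: under the identification $\mcU \iso \mcB/H$, the subgroup $\mcU \subset \mcB$ acts on itself by left multiplication, while $H \subset \mcB$ acts by conjugation. The two features that drive the proof are that $H$ fixes the identity $e \in \mcU$, and that $\mcU$ acts simply transitively on itself.

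I would interpret $V \otimes \C[\mcU]$ as the space of pro-algebraic maps $\phi : \mcU \arr V$, carrying the diagonal $\mcB$-action $(b \cdot \phi)(u) = b \cdot \phi(b^{-1} \cdot u)$. For injectivity of $\ev_e$, I would specialize $\mcU$-invariance of a $\mcB$-invariant $\phi$ to $u = u_1$, giving $\phi(u_1) = (u_1 \cdot \phi)(u_1) = u_1 \cdot \phi(e)$, so that $\phi$ is determined by $\phi(e)$. The $H$-part of invariance, evaluated at $e$, yields $\phi(e) = (h \cdot \phi)(e) = h \cdot \phi(e)$ since $h$ fixes $e$ under conjugation, showing $\phi(e) \in V^H$.

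For surjectivity, given $v \in V^H$ I would set $\phi_v(u) := u \cdot v$, which lies in $V \otimes \C[\mcU]$ because $\mcU$ acts pro-algebraically on $V$. The $\mcU$-invariance $(u_1 \cdot \phi_v)(u) = u_1 \cdot ((u_1^{-1}u) \cdot v) = u \cdot v$ is immediate, and the $H$-invariance reduces via conjugation to $h \cdot v = v$: explicitly, $(h \cdot \phi_v)(u) = h \cdot ((h^{-1} u h) \cdot v) = (uh) \cdot v = u \cdot v$. Since $\phi_v(e) = v$, this furnishes the required inverse.

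The one point requiring care is the correct interpretation of $V \otimes \C[\mcU]$ when $V$ is only a pro-representation. Writing $\C[\mcU]$ as a direct limit of coordinate rings of finite-dimensional unipotent quotients and verifying the identification with pro-algebraic maps at each finite stage should suffice; the calculations above are formal and compatible with passing to the limit. Conceptually the lemma is a form of Frobenius reciprocity for the inclusion $H \subset \mcB$, so I expect no serious obstacle beyond this bookkeeping.
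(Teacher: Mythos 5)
Your proposal is correct and is essentially the paper's argument: the paper's one-line proof constructs the same inverse, sending $v \in V^H$ to the invariant function $[b] \mapsto bv$ on $\mcB/H \iso \mcU$, which under the semidirect decomposition $\mcB = \mcU \rtimes H$ is exactly your $\phi_v(u) = u\cdot v$. Your additional checks (injectivity from transitivity of $\mcU$, landing in $V^H$ because $H$ fixes the identity, and $\phi_v \in V \otimes \C[\mcU]$ by the pro-representation hypothesis) are just the details the paper leaves implicit.
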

\begin{proof}
    Any element $v \in V^H$ extends to a $\mcB$-invariant function $\mcU
    \arr V$ by $[b] \mapsto b v$.
\end{proof}
The linear structure on $\hat{\mfn}$ and the isomorphism of $\mcU$ with
$\hat{\mfn}$ gives a $\mcB$-stable filtration of $\C[\mcU]$ by polynomial
degree.  Lemma \ref{L:filteredinvariants} implies that if $V$ is a
pro-representation of $\mcB$ then $V^H$ can be filtered via polynomial degree
on $\C[\mcU]$. If $\mu$ is a weight of $\mfg$ then extending $\mu$ by zero on
$\mcU$ makes $\C_{-\mu}$ into a pro-representation of $\mcB$.  The reason for
introducing a new filtration is the following lemma, which reduces the proof of
Theorem \ref{T:brylkm} to a vanishing result.
\begin{lemma} Let $W = \mcL(\lambda) \otimes \C_{-\mu}$, and filter
    $\mcL(\lambda)_{\mu} = W^H$ via the isomorphism $W^H \iso (W \otimes
    \C[\mcU])^{\mcB}$. If $H^1_{cts}(\hat{\mfb},\mfh; W \otimes S^*
    \hat{\mfn}^*) = 0$ then $H^0_{cts}(\hat{\mfb},\mfh; W \otimes S^*
    \hat{\mfn}^*) \iso \gr \mcL(\lambda)_{\mu}$.
\end{lemma}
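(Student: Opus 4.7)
The plan is to relate the Brylinski filtration (coming from the affine $\mcB$-action on $\C[\mcU]$) to the coadjoint $\hat{\mfb}$-module structure on the graded pieces $S^p \hat{\mfn}^*$ via short exact sequences and their long exact sequences in $H^*_{cts}(\hat{\mfb}, \mfh; -)$. First I would identify, via the isomorphism $\mcU \iso \hat{\mfn}$ from Lemma \ref{L:affinestruct}, the polynomial-degree filtration on $\C[\mcU]$ with the subspaces $S^{\leq p} \hat{\mfn}^* \subset S^* \hat{\mfn}^*$. Because the $\mcB$-action on $\mcU \iso \hat{\mfn}$ is affine, each $S^{\leq p} \hat{\mfn}^*$ is a $\hat{\mfb}$-submodule, and the quotient $S^{\leq p}/S^{\leq p-1}$ is isomorphic as a $\hat{\mfb}$-module to $S^p \hat{\mfn}^*$ equipped with the coadjoint action --- only the linear part of an affine transformation survives in passing to the associated graded.

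For each $p$, the short exact sequence $0 \to W \otimes S^{\leq p-1} \hat{\mfn}^* \to W \otimes S^{\leq p} \hat{\mfn}^* \to W \otimes S^p \hat{\mfn}^* \to 0$ then yields a long exact sequence
$$0 \to F^{p-1} W^H \to F^p W^H \to H^0_{cts}(\hat{\mfb}, \mfh; W \otimes S^p \hat{\mfn}^*) \to H^1_{cts}(\hat{\mfb}, \mfh; W \otimes S^{\leq p-1} \hat{\mfn}^*) \to \cdots$$
after identifying $F^p W^H \iso H^0_{cts}(\hat{\mfb}, \mfh; W \otimes S^{\leq p} \hat{\mfn}^*)$ via the preceding lemma applied to each filtered piece. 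I would then deduce from the hypothesis $H^1_{cts}(\hat{\mfb}, \mfh; W \otimes S^* \hat{\mfn}^*) = 0$ that $H^1_{cts}(\hat{\mfb}, \mfh; W \otimes S^{\leq p} \hat{\mfn}^*) = 0$ for every $p$, by induction: since the coadjoint action preserves the grading, we have $S^* \hat{\mfn}^* = \bigoplus_p S^p \hat{\mfn}^*$ as $\hat{\mfb}$-modules, so the hypothesis forces $H^1_{cts}(\hat{\mfb}, \mfh; W \otimes S^p \hat{\mfn}^*) = 0$ term-by-term, and then the long exact sequence above provides the inductive step. Once all these $H^1$'s vanish, the long exact sequence collapses to $0 \to F^{p-1} W^H \to F^p W^H \to H^0_{cts}(\hat{\mfb}, \mfh; W \otimes S^p \hat{\mfn}^*) \to 0$, giving $\gr^p W^H \iso H^0_{cts}(\hat{\mfb}, \mfh; W \otimes S^p \hat{\mfn}^*)$; summing over $p$ produces the claimed graded isomorphism.

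The main subtlety will be the passage in the hypothesis from the global module $S^* \hat{\mfn}^*$ to its graded pieces. This requires distinguishing the coadjoint $\hat{\mfb}$-module structure on $S^* \hat{\mfn}^*$ (which decomposes as a direct sum over $p$ and is what appears in the hypothesis and conclusion) from the affine structure on $\C[\mcU]$ (which gives the filtration), and checking that continuous relative cohomology commutes with the direct-sum decomposition of the coadjoint module. Both points should follow from the finite-dimensionality of the $\mfh$-weight spaces of $W \otimes S^* \hat{\mfn}^* \otimes \bigwedge^* \hat{\mfn}^*$ already invoked in the proof of Theorem \ref{T:brylpoincare}, so that the Chevalley-Eilenberg complex decomposes weight-by-weight into finite-dimensional pieces and the bookkeeping with direct sums is harmless. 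The remaining steps --- the identification of the filtration, the long exact sequences, the induction, and the final summation --- are then essentially formal.
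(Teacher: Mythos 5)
Your proposal is correct and follows essentially the same route as the paper: your $S^{\leq p}\hat{\mfn}^*$ (the affine $\mcB$-module filtration with $\gr \iso S^*\hat{\mfn}^*$ coadjointly) is exactly the paper's filtration $\mcF^p \subset \C[\mcU]$ by polynomial degree, and the short exact sequences, the induction on $p$ from $\mcF^{-1}=0$ using the hypothesis on each graded piece, and the collapse of the degree-zero long exact sequence are all as in the paper's proof. The direct-sum bookkeeping you flag is harmless (each $S^p\hat{\mfn}^*$ is a $\hat{\mfb}$-module direct summand of $S^*\hat{\mfn}^*$, so its $H^1$ is a retract of the vanishing $H^1$), and the paper treats it as immediate.
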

\begin{proof}
    Let $\mcF^p$ be the subset of $\C[\mcU]$ of polynomials of degree at most
    $p$.  Then $\gr \C[\mcU] = S^* \hat{\mfn}^*$ as $\mcB$-modules, so there
    are short exact sequences
    \begin{equation*}
        0 \arr W\otimes \mcF^{p-1} \arr W \otimes \mcF^{p} \arr W\otimes S^p
            \hat{\mfn}^* \arr 0 
    \end{equation*}
    of $\mcB$-modules for all $p$. The corresponding long exact sequence in Lie
    algebra cohomology is
    \begin{align*}
         H^i_{cts}(\hat{\mfb},\mfh;W\otimes \mcF^{p-1}) \arr 
          H^i_{cts}(\hat{\mfb},\mfh;W \otimes \mcF^{p}) \arr &
          H^i_{cts}(\hat{\mfb},\mfh; W\otimes S^p \hat{\mfn}^* ) \\
            & \arr H^{i+1}_{cts}(\hat{\mfb},\mfh; W\otimes \mcF^{p-1}).
    \end{align*}
    Since $H^i_{cts}(\hat{\mfb},\mfh; W\otimes S^p \hat{\mfn}^* ) = 0$ for
    $i=1$, the inclusion $W \otimes \mcF^{p-1} \incl W\otimes \mcF^{p}$ induces
    a surjection in degree one cohomology for all $p$. Since $\mcF^{-1} = 0$,
    $H^1_{cts}(\hat{\mfb},\mfh;W \otimes \mcF^{p}) =0$ for all $p$. The long
    exact sequence in degree $i=0$ gives an isomorphism
    $H^0_{cts}(\hat{\mfb},\mfh; W \otimes S^p \hat{\mfn}^* ) \iso (W \otimes
    \mcF^{p})^{\mfb} / (W\otimes \mcF^{p-1})^{\mfb}$. This latter quotient is
    the graded space of $(W\otimes \C[U])^{\mfb}$ as required.
\end{proof}

Now we show that the new filtration is equal to the Brylinski filtration when
$\mfg$ is affine.
\begin{prop}\label{P:heisenberg}
    Let $\mcL(\lambda)$ be an integrable highest-weight representation of an
    affine Kac-Moody $\mfg$. Then the Brylinski filtration on a weight space
    $\mcL(\lambda)_{\mu}$ agrees with the filtration of $\mcL(\lambda)_{\mu}
    \iso (\mcL(\lambda) \otimes \C_{-\mu} \otimes \C[\mcU])^{\mcB}$ by
    polynomial degree.
\end{prop}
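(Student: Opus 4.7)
My plan is to establish both containments ${}^{\mfs} F^i \subseteq \mcF^i$ and $\mcF^i \subseteq {}^{\mfs} F^i$ by analyzing the $\mcB$-equivariant function $f_v : \mcU \to W$, with $W = \mcL(\lambda) \otimes \C_{-\mu}$, given by $f_v(u) = u \cdot v$; this is the image of $v \in W^H$ under the inverse of the evaluation isomorphism $(W \otimes \C[\mcU])^{\mcB} \iso W^H$ of the preceding lemma.

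For the inclusion $\mcF^i \subseteq {}^{\mfs} F^i$, I would restrict $f_v$ to one-parameter subgroups coming from the Heisenberg. The key observation, which relies on $\mfs \cap \mfn$ being abelian, is that for $z \in \mfs \cap \mfn$ every iterated bracket $(\ad z)^k \delta$ with $k \geq 2$ vanishes, giving the linear formula $\pi(\exp z) = [z,\delta] = -\sum_n n z_n$ (for $z = \sum_n z_n$ the homogeneous decomposition). Hence any line through the origin in $\mfs \cap \mfn \subset \hat{\mfn}$ pulls back under $\pi^{-1}$ to a one-parameter subgroup $\{\exp(tz') : t \in \C\} \subset \mcU$ for some $z' \in \mfs \cap \mfn$. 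Restricting $f_v$ along such a subgroup gives the polynomial $t \mapsto \sum_{k \geq 0} \frac{t^k}{k!}(z')^k v$, which has degree $\leq i$ in $t$ exactly when $(z')^{i+1} v = 0$. So if $v \in \mcF^i$, meaning $f_v$ has polynomial degree $\leq i$ on $\hat{\mfn}$ in $\pi$-coordinates, then $(z')^{i+1} v = 0$ for every $z' \in \mfs \cap \mfn$, i.e.\ $v \in {}^{\mfs} F^i$.

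The reverse containment ${}^{\mfs} F^i \subseteq \mcF^i$ is the main content of the proposition. I plan to use a decomposition $\hat{\mfn} = (\mfs \cap \mfn) \oplus \mfm$ as $\mfs$-modules, available in the affine case from the structure theory of the principal Heisenberg (Kac-Peterson/Lepowsky-Wilson), together with the identity $[s,m]\cdot v = s(mv) - m(sv)$ for $s \in \mfs \cap \mfn, m \in \mfm$ to rewrite $\mfm$-contributions to $f_v$ in terms of Heisenberg actions on $v$, modulo terms of lower polynomial degree in $\pi$-coordinates. Iterating this reduction, controlled by principal degree (preserved by both the Heisenberg splitting and by $\pi$), should give $f_v \in W \otimes \mcF^i$ whenever $v \in {}^{\mfs} F^i$. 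The main obstacle is that the coordinate change $\pi^{-1} \circ \exp$ is nonlinear on $\mfm$, so the rearrangement of $\mfm$-contributions into Heisenberg form must track lower-order corrections carefully; organizing the induction by principal degree---the one grading respected by $\pi$, by $\exp$, and by the $\mfs$-isotypic decomposition of $\hat{\mfn}$---should keep the bookkeeping manageable.
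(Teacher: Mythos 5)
Your first containment ($\mcF^i \subseteq {}^{\mfs}F^i$) is correct and is exactly the paper's computation: since $\mfs\cap\mfn$ is graded and abelian, $\Ad(e^{z})\delta = \delta + [z,\delta]$, so lines in $\mfs\cap\mfn$ lift to one-parameter subgroups $e^{t\tilde s}$ with $\pi(e^{t\tilde s}) = ts$, and restricting along them identifies the $t$-degree with the largest nonvanishing power $s^{n}v$.

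The reverse containment is where your proposal has a genuine gap, and the sketch you give does not contain the idea that actually makes it work. Knowing $s^{i+1}v=0$ for all $s\in\mfs\cap\mfn$ gives, by itself, no control over monomials in directions transverse to the Heisenberg: a splitting $\hat{\mfn}=(\mfs\cap\mfn)\oplus\mfm$ together with the identity $[s,m]v = s(mv)-m(sv)$ exists for many graded abelian subalgebras, and for the wrong choice the statement is simply false --- the paper's $\widehat{\mfsl_2}$ example (where ${}^eP^{\lambda}_{\mu}\neq m^{\lambda}_{\mu}$) shows that a formal rearrangement argument insensitive to which abelian subalgebra is used cannot succeed. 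The property that singles out $\mfs\cap\mfn$ is geometric: $\mfn = (\mfs\cap\mfn) + [\mfb,e]$ for a suitable principal nilpotent (chosen so that $[e,-\overline{e}]$ is central, via the null vector of $A^t$ and a contragradient-form argument), whence $\Ad(\mcB)(\mfs\cap\mfn)$ is \emph{dense} in $\hat{\mfn}$. The paper's proof of the hard direction runs through this density: the top-degree coefficient of the invariant function is a polynomial, so the degree is attained at a generic point, which by density may be taken of the form $\Ad(b)s$ with $s\in\mfs\cap\mfn$; $\mcB$-invariance and the parallel-line observation then transfer the degree computation to a Heisenberg one-parameter subgroup acting on $v$. (To run this genericity argument cleanly the paper also first embeds $\mcL(\lambda)\otimes\C_{-\mu}$ into $\C[\mcU]\otimes\C_{\lambda-\mu}$ as $\mcB$-modules, so that one works with scalar-valued invariant functions on $\mcU\times\mcU$.) Your induction on principal degree, with ``lower-order corrections'' from the nonlinearity of $\pi^{-1}\circ\exp$ left unresolved, replaces none of this; as written it is a plan rather than a proof, and the missing ingredient is precisely the density lemma (or an equivalent form of $\mfn=(\mfs\cap\mfn)+[\mfb,e]$) combined with $\mcB$-invariance.
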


The proof of Proposition \ref{P:heisenberg} requires two lemmas.
\begin{lemma}
    If $\mfg$ is affine and $\mfs$ is a principal Heisenberg then $\Ad(\mcB)
    (\mfs \cap \mfn)$ is dense in $\hat{\mfn}$.
\end{lemma}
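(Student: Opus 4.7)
The plan is to establish density by checking that the tangent space to the orbit $\Ad(\mcB)(\mfs \cap \mfn)$ at the principal nilpotent $e$ fills out $\hat{\mfn}$ in every finite truncation. Since $\hat{\mfn} = \lim_{\leftarrow} \mfn/\mfn_k$ and each truncation is an irreducible affine space, it suffices to show that the image of $\Ad(\mcB)(\mfs \cap \mfn)$ in each quotient $\mfn/\mfn_k$ is dense, which follows if it has full Zariski tangent space at some point. At the point $e \in \mfs \cap \mfn$, varying $b \in \mcB$ contributes $[\hat{\mfb}, e]$ to the tangent space and varying the point of $\mfs \cap \mfn$ contributes $\mfs \cap \mfn$, so the proof reduces to the identity
\[ \hat{\mfn} = [\hat{\mfb}, e] + (\mfs \cap \mfn). \]

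I will verify this identity in each principal degree $m$. In degree $1$, the map $h \mapsto [h, e] = \sum c_i \alpha_i(h) e_i$ from $\mfh$ to $\mfg_1$ is surjective since the simple roots $\alpha_i$ are linearly independent on $\mfh$ and the coefficients $c_i$ in $e = \sum c_i e_i$ are all nonzero; so $[\mfh, e] = \mfg_1$. In each degree $m \geq 2$ I will establish the stronger decomposition $\mfg_m = [e, \mfg_{m-1}] \oplus (\mfs \cap \mfg_m)$. The tool is the invariant symmetric bilinear form $\langle \cdot, \cdot \rangle$ on $\mfg$, which pairs $\mfg_n$ with $\mfg_{-n}$ non-degenerately and makes $\ad e$ skew-adjoint; this gives $[e, \mfg_{m-1}] = (\ker \ad e|_{\mfg_{-m}})^\perp$ inside $\mfg_m$. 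Any $y \in \ker \ad e|_{\mfg_{-m}}$ satisfies $[y, e] = 0 \in Z(\mfg)$ and so lies in $\mfs$; and for $m \geq 2$ any $y \in \mfs \cap \mfg_{-m}$ automatically has $[y, e] \in Z(\mfg) \cap \mfg_{1-m} = 0$. Hence $\ker \ad e|_{\mfg_{-m}} = \mfs \cap \mfg_{-m}$, so $[e, \mfg_{m-1}] = (\mfs \cap \mfg_{-m})^\perp$ in $\mfg_m$. To conclude the direct-sum decomposition I will check that the restricted pairing $\mfs \cap \mfg_m \otimes \mfs \cap \mfg_{-m} \to \C$ is non-degenerate: the Heisenberg bracket $[s_m, s_{-m}]$ is a nonzero multiple of the central element $c$, and invariance of the form tested against the grading derivation $d$ then forces $\langle s_m, s_{-m}\rangle \neq 0$. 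Hence $(\mfs \cap \mfg_{-m})^\perp$ and $\mfs \cap \mfg_m$ are complementary subspaces of $\mfg_m$.

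The hardest step will be the degree-wise identity $\mfg_m = [e, \mfg_{m-1}] \oplus (\mfs \cap \mfg_m)$ for $m \geq 2$, which hinges on non-vanishing of the Heisenberg pairing on exponent weight spaces together with skew-adjointness of $\ad e$. Once this is in place, assembling degrees gives $\hat{\mfn} = [\hat{\mfb}, e] + (\mfs \cap \mfn)$; the orbit then has full tangent space at $e$ in every truncation, so its closure contains a Zariski-open neighborhood of $e$ in $\mfn/\mfn_k$ and equals the whole irreducible truncation, from which density in $\hat{\mfn}$ follows.
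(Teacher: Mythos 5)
Your argument is correct in substance, but it reaches the key identity $\hat{\mfn} = [\hat{\mfb},e] + (\mfs\cap\mfn)$ by a genuinely different route than the paper. The paper first uses the fact that principal nilpotents form a single orbit to replace $e$ by the special choice $e=\sum\sqrt{c_i}e_i$ with $A^tc=0$, so that $f=-\overline{e}$ lies in $\mfs_e$; it then works with the positive-definite contragredient Hermitian form $\{\cdot,\cdot\}$, noting that $x\perp[\mfb,e]$ forces $[f,x]=0$, and the identity $\{[e,x],[e,x]\}=\{[f,x],[f,x]\}=0$ (valid because $[e,f]$ is central) puts $x$ in $\mfs_e$. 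You instead work with an arbitrary principal nilpotent and the invariant symmetric form: skew-adjointness of $\ad e$ gives $[e,\mfg_{m-1}]=(\ker\ad e|_{\mfg_{-m}})^{\perp}$, the grading argument identifies $\ker\ad e|_{\mfg_{-m}}$ with $\mfs\cap\mfg_{-m}$ for $m\geq 2$, and non-degeneracy of $\langle\cdot,\cdot\rangle$ between opposite graded pieces of the Heisenberg upgrades the sum to a direct sum $\mfg_m=[e,\mfg_{m-1}]\oplus(\mfs\cap\mfg_m)$. Your version avoids both the single-orbit reduction and the special normalization of $e$, and yields a slightly stronger (direct-sum) statement; the paper's version is shorter because positive-definiteness of the Hermitian form does at once the work that your non-degeneracy step has to supply. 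Two small points need tightening. First, $[s_m,s_{-m}]$ is not a nonzero multiple of $c$ for every pair (the graded pieces $\mfs\cap\mfg_m$ can be more than one-dimensional, e.g.\ for repeated exponents); what you need, and what the Heisenberg structure together with gradedness gives, is that for each nonzero $s_m$ there exists $s_{-m}\in\mfs\cap\mfg_{-m}$ with $[s_m,s_{-m}]=\kappa c$, $\kappa\neq 0$. Second, the invariance test should be made against the principal-grading element $\delta\in\mfh$ (with $\alpha_i(\delta)=1$ for all $i$, so $\langle c,\delta\rangle\neq 0$), not against the derivation $d$: elements of $\mfs$ are not homogeneous for the $z$-grading, so $[s_{-m},d]$ need not be proportional to $s_{-m}$, whereas $[s_{-m},\delta]=m\,s_{-m}$ and the computation $\kappa\langle c,\delta\rangle=m\langle s_m,s_{-m}\rangle$ gives exactly the non-vanishing you want. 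With these adjustments your degree-wise decomposition, and hence the submersion/density conclusion via the truncations $\mfn/\mfn_k$, goes through.
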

\begin{proof}
    The principal nilpotents form a single orbit, so it is only necessary to
    prove this fact for a single principal nilpotent. We claim that there is a
    principal nilpotent such that $f = -\overline{e} \in \mfs_e$, so that in
    particular $[e,f] \in Z(\mfg)$. Indeed, let $A$ be the generalized
    Cartan matrix defining $\mfg$, i.e.  $A_{ij} = \alpha_j(\alpha_i^{\vee})$.
    Since $\mfg$ is affine there is a vector $c > 0$, unique up to a scalar
    multiple, such that $A^t c = 0$. If we pick $e = \sum \sqrt{c_i} e_i$ then
    $[e,f] = \sum c_i \alpha_i^{\vee}$, and $\alpha_j([e,f]) = \sum c_i A_{ij}
    = (A^t c)_j = 0$ for all simple roots $\alpha_j$. 

    Now we show that $\mfn = (\mfs_e \cap \mfn) + [\mfb,e]$. In degree one we
    have $[\mfh,e] = \mfg_1$. For higher degrees, let $\{,\}$ denote the
    standard non-degenerate contragradient Hermitian form on $\mfg$ which is
    positive definite on $\mfn$.  An element $x \in \mfn$ is orthogonal to
    $[\mfb,e]$ if and only if $0 = \{[e,z],x\} = \{z,[f,x]\}$ for all $z \in
    \mfb$, or in other words if and only if $x \in C_{\mfg}(f)$. Suppose $x \in
    \mfg_n$, $n \geq 2$ belongs to $[\mfb,e]^{\perp}$.  Using the fact that $[e,f]
    \in Z(\mfg)$ we get that $\{[e,x],[e,x]\} = \{[f,x],[f,x]\} = 0$, and
    conclude that $x \in \mfs_e$.

    $(\mfs \cap \mfn) + [\mfb,e] = \mfn$ implies that the multiplication map
    $\mcB \times (\mfs \cap \mfn) \arr \hat{\mfn}$ is a submersion in a
    neighbourhood of $(\Id,e)$.  Since $\mcB$ acts algebraically on $\mfs \cap
    \mfn \subset \hat{\mfb}$, the subset $\mcB (\mfs \cap \mfn)$ is dense in
    $\hat{\mfn}$.
\end{proof}

\begin{lemma}
    Let $\mcL(\lambda)$ be an integrable highest-weight module. Considered as a
    $\mcB$-module, $\mcL(\lambda)$ is a submodule of $\C[\mcU] \otimes
    \C_{\lambda}$.
\end{lemma}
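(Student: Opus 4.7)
The plan is to realize the embedding as a matrix coefficient associated to the highest-weight vector, then check its properties via Frobenius reciprocity. Concretely, let $\pi_{\lambda} : \mcL(\lambda) \twoheadrightarrow \mcL(\lambda)_{\lambda} = \C v_{\lambda} \iso \C_{\lambda}$ denote the $H$-equivariant projection onto the highest-weight line, and set
\[
    \Phi : \mcL(\lambda) \arr \C[\mcU] \otimes \C_{\lambda}, \qquad \Phi(v)(u) = \pi_{\lambda}(u^{-1} v).
\]
This is the image of $\pi_{\lambda}$ under Frobenius reciprocity $\Hom_{\mcB}(\mcL(\lambda), \Ind_H^{\mcB} \C_{\lambda}) \iso \Hom_H(\mcL(\lambda), \C_{\lambda})$, combined with the identification $\Ind_H^{\mcB} \C_{\lambda} \iso \C[\mcU] \otimes \C_{\lambda}$ coming from the section $\mcU \incl \mcB$ provided by Lemma \ref{L:affinestruct}. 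A short computation confirms that under this identification the $\mcB$-action on $\C[\mcU]$ induced by left translation on $\mcB/H$ matches the one whose diagonal tensor with the weight line $\C_{\lambda}$ realizes the induced representation, so the $\mcB$-equivariance of $\Phi$ is automatic.

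Next I would verify that $\Phi(v)$ actually lies in $\C[\mcU]$ rather than in some formal completion. Because the principal-grading weights of $\mcL(\lambda)$ are bounded above by the degree of $\lambda$, for each fixed $v$ there is a $k$ with $\hat{\mfn}_k \cdot v = 0$; consequently $u \mapsto u^{-1} v$ factors through the finite-dimensional unipotent quotient $\exp(\mfn / \mfn_k)$ and is polynomial in the coordinates of $\hat{\mfn} \iso \mcU$. Extracting the $v_{\lambda}$-coefficient then yields a genuine polynomial function on $\mcU$.

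The substantive step, and the one I expect to require the most care, is injectivity. Rewriting dually, $\Phi(v)(u) = \langle \pi^{\vee}(u) v_{\lambda}^{*}, v \rangle$ where $v_{\lambda}^{*} \in \mcL(\lambda)^{*}$ is dual to $v_{\lambda}$ and $\pi^{\vee}$ denotes the contragredient action, so $\Phi(v) = 0$ is equivalent to $v$ being orthogonal to the $\mcU$-orbit of $v_{\lambda}^{*}$; differentiating, $v$ is then orthogonal to the entire $U(\mfn)$-submodule generated by $v_{\lambda}^{*}$. A weight argument analogous to the one above shows $\overline{\mfn} \cdot v_{\lambda}^{*} = 0$, so that $v_{\lambda}^{*}$ is a lowest-weight vector in the restricted dual $\mcL(\lambda)^{*}$. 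Irreducibility of $\mcL(\lambda)$ yields irreducibility of $\mcL(\lambda)^{*}$, and combining this with the PBW decomposition $U(\mfg) = U(\mfn) U(\mfh) U(\overline{\mfn})$ gives $U(\mfn) v_{\lambda}^{*} = U(\mfg) v_{\lambda}^{*} = \mcL(\lambda)^{*}$. Nondegeneracy of the weight-space pairing $\mcL(\lambda)^{*} \otimes \mcL(\lambda) \arr \C$ then forces $v = 0$.
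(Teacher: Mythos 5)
Your proof is correct, but it takes a genuinely different route from the paper's. You construct the embedding at the group level as a matrix coefficient, $v \mapsto (u \mapsto \pi_{\lambda}(u^{-1}v))$, i.e. as the image of the highest-weight projection under Frobenius reciprocity $\Hom_{\mcB}(\mcL(\lambda), \Ind_H^{\mcB}\C_{\lambda}) \iso \Hom_H(\mcL(\lambda), \C_{\lambda})$, and then verify the two non-formal points by hand: polynomiality of matrix coefficients on $\mcU$ (your boundedness-of-principal-degree argument does give this, with the small caveat that $k$ must be taken large enough that $\hat{\mfn}_k$ kills every component of the orbit $\mcU v$, not merely $v$ itself — the same boundedness supplies such a $k$), and injectivity, which you reduce to the cyclicity $U(\mfn)\, v_{\lambda}^{*} = \mcL(\lambda)^{\vee}$ in the restricted dual together with nondegeneracy of the weight-space pairing. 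The paper instead works at the Lie-algebra level: it dualizes the surjection $M_{low}(-\lambda) \arr \mcL_{low}(-\lambda)$ from the lowest-weight Verma module, using freeness of the Verma over $U(\mfn)$ to identify its finitely supported dual with $S^{*}\hat{\mfn}^{*} \otimes \C_{\lambda} \iso \C[\mcU]\otimes\C_{\lambda}$, so injectivity is automatic (dual of a surjection) and the polynomiality and $\mcB$-integrability questions are absorbed into that single identification of $\mcB$-modules. The two arguments are essentially dual to each other — your cyclicity of $v_{\lambda}^{*}$ under $U(\mfn)$ is exactly the dualized statement that the Verma quotient map is onto — but yours is a direct Borel-Weil-style construction (precisely the route the paper says it sidesteps for lack of a citable Borel-Weil theorem for the thick flag variety), at the price of checking regularity and $\mcB$-equivariance of the matrix coefficient explicitly, whereas the paper's Verma-dual route trades those checks for verifying that the $\mfb$-action on $S^{*}\hat{\mfn}^{*}$ integrates to the $\mcB$-action on $\C[\mcU]$.
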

\begin{proof}
    This statement would follow immediately from a Borel-Weil theorem for the
    thick flag variety of a Kac-Moody group. As we are not aware of a formal
    statement of the Borel-Weil theorem in this context, we recover the result
    from the dual of the quotient map $M_{low}(-\lambda) \arr
    \mcL_{low}(-\lambda)$, where $M_{low}(-\lambda) = U(\mfg)
    \otimes_{U(\overline{\mfb})} \C_{-\lambda}$ is a lowest weight Verma module,
    and $\mcL_{low}(-\lambda)$ is the irreducible representation with lowest
    weight $-\lambda$. Both these spaces are $\mfg$-modules with finite
    gradings induced by the principal grading of $\mfg$. Let
    $M_{low}(-\lambda)^*$ and $\mcL(-\lambda)^*$ denote the finitely-supported
    duals, consisting of linear functions which are supported on a finite
    number of graded components. 

    Using the fact that $M_{low}(-\lambda)$ is a free $U(\mfn)$-module, we can
    identity $M_{low}(-\lambda)$ with $S^* \mfn \otimes \C_{-\lambda}$ where
    $S^* \mfn$ has the $\mfb$-action $(y,x) \mapsto [y,\delta] \circ x +
    \ad(y)x$, the symbol $\circ$ denotes symmetric multiplication, and $\delta$
    is defined as in Lemma \ref{L:affinestruct} as an element of $\mfh$ which
    acts on $\mfg_n$ as multiplication by $n$.  The finitely supported dual of
    $M_{low}(-\lambda)$ can be identified with $S^* \hat{\mfn}^* \otimes
    \C_{\lambda}$ where $\mfb$ acts on $S^* \hat{\mfn}^*$ by $(y,f) \mapsto
    \ad^t(y) f + \iota([\delta,y]) f$.  It is not hard to check that this
    action integrates to the $\mcB$-action coming from identifying $S^*
    \hat{\mfn}^*$ with $\C[\mcU]$. Since the quotient map preserves the
    principal grading, the dual of the surjection $M_{low}(-\lambda) \arr
    \mcL_{low}(-\lambda)$ is an inclusion $\mcL(\lambda) =
    \mcL_{low}(-\lambda)^* \incl M_{low}(-\lambda)^* = \C[\mcU] \otimes
    \C_{\lambda}$ as required. 
\end{proof}

\begin{proof}[Proof of Proposition \ref{P:heisenberg}]
    Let $V = \C_{\beta} \otimes \C[\mcU]$, where $\beta = \lambda - \mu$. By
    the last lemma, we can prove the proposition with $\mcL(\lambda)_{\mu}$
    replaced by $V^H$, where the
    filtration on $V^H$ is defined by $V^H \iso (V \otimes \C[\mcU])^{\mcB}$.
    An element $f$ of this latter set can be identified with a $\mcB$-invariant
    function $\mcU \times \mcU \arr \C_{\beta}$. The polynomial degree on the
    second factor is the maximum $t$-degree of $f(u,tx)$ as $u$ ranges across
    $\mcU$ and $x$ ranges across $\hat{\mfn} \iso \mcU$. Suppose this maximum
    is achieved at $(u_0,x_0)$.  Since $\mcB (\mfs \cap \mfn)$ is dense in
    $\hat{\mfn}$, we can assume that $x_0 = \Ad(b) s$ for $b \in \mcB$ and $s
    \in \mfs \cap \mfn$.  Now $\mfs \cap \mfn$ is abelian and graded, so the
    graded components of $s$ commute with each other. This allows us to find
    $\tilde{s} \in \mfs \cap \mfn$ such that $\pi(e^{t \tilde{s}}) = t s$.
    Since the degree of $f(u_0,\cdot)$ is achieved on the line $\Ad(b) \pi(e^{t
    \tilde{s}})$, it is also achieved on the parallel line $\Ad(b) \pi(e^{t
    \tilde{s}}) + \pi(b) = \pi(b e^{t \tilde{s}})$. Thus the polynomial degree
    of $f$ is equal to the $t$-degree of $f(u_0,b \pi(e^{t \tilde{s}})) =
    \beta(b) f(b^{-1} u_0, \pi(e^{t \tilde{s}}))$. Since $\beta(b)$ is a
    non-zero scalar, we conclude that there is $u \in \mcU$ and $s \in \mfs
    \cap \mfn$ such that the degree of $f$ is equal to the $t$-degree of
    $f(u,\pi(e^{t s}))$.  Conversely if $s \in \mfs\cap \mfn$ then $\pi(e^{t
    s})$ is a line in $\hat{\mfn}$, so the degree of $f$ is equal to the
    $t$-degree of $f(u,\pi(e^{t s}))$ as $u$ ranges across $\mcU$ and $s$
    ranges across $\mfs \cap \mfn$.

    Given $f \in (\C_{\beta} \otimes \C[\mcU] \otimes \C[\mcU])$ let $\tilde{f}
    \in \C_{\beta} \otimes \C[\mcU]$ be the restriction to $\mcU \times \{\Id\}$.
    The $\mcB$-action on $\C_{\beta} \otimes \C[\mcU]$ is defined by $(b\cdot f)(u)
    = \beta(b) f(b^{-1} u)$, so if $f$ is $\mcB$-invariant then the $t$-degree
    of $f(u,\pi(e^{ts}))$ is equal to the $t$-degree of $(e^{ts}
    \tilde{f})(u)$. Since
    \begin{equation*}
        e^{ts} \tilde{f} = \sum_{n \geq 0} \frac{t^n}{n!} s^n \tilde{f},
    \end{equation*}
    the degree of $f$ is equal to the smallest $n$ such that $s^{n+1}
    \tilde{f} = 0$ for all $s \in \mfs\cap \mfn$.
\end{proof}
The proof of Proposition \ref{P:heisenberg} works just as well with $\mfs \cap
\mfn$ replaced by any graded abelian subalgebra $\mfa$ of $\hat{\mfn}$ such
that $\Ad(\mcB) \mfa$ is dense in $\hat{\mfn}$. For example, in the
finite-dimensional case we could take $\mfa = \C e$. If $\mfg = \bigoplus
\mfg_i$ is a direct sum of indecomposables of finite or affine type then
we can take $\mfa = \bigoplus \mfa_i$, where $\mfa_i$ is either the positive
part of the principal Heisenberg, or the line through the positive nilpotent,
depending on whether $\mfg_i$ is affine or finite.

\section{Cohomology vanishing}\label{S:cohvanish}

\subsection{Nakano's identity and the Laplacian}\label{SS:nakano}

We need some tools to prove the necessary cohomology vanishing result.
Throughout this section $\mfg$ will be an arbitrary symmetrizable Kac-Moody
algebra.  $(V,\pi)$ will be a $\hat{\mfb}$-module such that $\pi|_{\mfg_0}$
extends to an action of $\overline{\mfb}$ (this conjugate action will also be
denoted by $\pi$). Note that since $\mfn = \mfg / \overline{\mfb}$,
$\hat{\mfn}^*$ is both a $\hat{\mfb}$-module and a $\overline{\mfb}$-module.
The space $\overline{\mfn} = \mfg / \mfb$ has the same property. 
\begin{defn}
    The \emph{semi-infinite chain complex} $(C^{*,*}(V),\deltabar, D)$
    is the bicomplex 
    \begin{equation*}
        C^{-a,b}(V) = \left(\bigwedge^{b} \hat{\mfn}^* \otimes \bigwedge^a
            \overline{\mfn} \otimes V\right)^{\mfg_0}.
    \end{equation*}
    with differentials $\deltabar$ and $D$, where the former is the Lie algebra
    cohomology differential of $\hat{\mfn}$ with coefficients in $\bigwedge^*
    \overline{\mfn} \otimes V$, and the latter is the Lie algebra homology
    differential of $\overline{\mfn}$ with coefficients in $\bigwedge^* \hat{\mfn}^*
    \otimes V$, both restricted to $\mfg_0$-invariants. 
\end{defn}
To make the definition of $\deltabar$ and $D$ more explicit, identify
$C^{*,*}(V)$ with $\bigwedge^* (\hat{\mfn}^* \oplus \bar{\mfn}) \otimes V$.
Then the Clifford algebra of $\mfn \oplus \bar{\mfn} \oplus \hat{\mfn}^* \oplus
\hat{\bar{\mfn}}^*$ with the dual pairing acts on $C^{*,*}(V)$, where
$\hat{\mfn}^*$ and $\bar{\mfn}$ act by exterior multiplication, and $\mfn$
and $\hat{\bar{\mfn}}^*$ act by interior multiplication. Pick a homogeneous
basis $\{z_i\}_{i \geq 1}$ for $\mfn$, let $\{z^i\}$ denote the dual basis,
and let $z_{-i} = \overline{z_i}$. Then 
\begin{equation*}
    \deltabar = \sum_{k \geq 1} \eps(z^k) \left(\frac{1}{2} \ad^t_n(z_k) + 
        \ad_{\overline{\mfn}} (z_k) + \pi(z_k) \right),
\end{equation*}
where $\eps$ is exterior multiplication, while
\begin{equation*}
    D = \sum_{k \geq 1} \left( \frac{1}{2} \ad_{\overline{\mfn}} (z_{-k})
        + \ad_{\mfn}^t(z_{-k}) + \pi(z_{-k}) \right) \iota(z^{-k}),
\end{equation*}
where $\iota$ is interior multiplication.
    
The semi-infinite cocycle is defined by $\gamma|_{\mfg_m \times \mfg_n} = 0$ if
$m + n \neq 0$ and by 
\begin{equation*}
    \gamma(x,y) = \sum_{0 \leq n < k} \tr_{\mfg_n}(\ad(x)\ad(y))
\end{equation*}
for $x \in \mfg_k$, $y \in \mfg_{-k}$, $k \geq 0$. Since $\mfh = \mfg_0$ is
abelian, $(x,y) = -\gamma(x,\overline{y})$ defines a Hermitian form on $\mfn$.
\begin{lemma}\label{L:kahler}
    Let $\langle,\rangle$ be a symmetric invariant form on $\mfg$
    (real-valued on a real-form of $\mfg$) such that $\{\cdot,\cdot\} =
    -\langle \cdot, \overline{\cdot} \rangle$ is contragradient and
    positive-definite on $\mfn$. Then the Hermitian form $(\cdot,\cdot) =
    -\gamma(\cdot,\overline{\cdot})$ on $\mfn$ agrees with the form defined by
    \begin{equation*}
        (x,y) = 2 \langle \rho, \alpha \rangle \{x,y\}, x \in \mfg_{\alpha}.
    \end{equation*}
\end{lemma}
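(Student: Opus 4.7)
The plan is a direct computation that reduces the identity to a single dual weight pair, derives a telescoping trace recursion, and identifies the resulting finite sum with the classical $\rho$-datum.

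First, by bilinearity (and since both $\gamma(\cdot,\overline{\cdot})$ and $\{\cdot,\cdot\}$ vanish on pairs of weight vectors whose weights are not equal), it suffices to fix a positive root $\alpha$ and check the identity for $x\in\mfg_\alpha$ and $z:=\overline{y}\in\mfg_{-\alpha}$; the target becomes
\[
\gamma(x,z) \;=\; 2\langle\rho,\alpha\rangle\,\langle x,z\rangle.
\]

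Next, for each weight $\beta$ set $t_\beta:=\tr_{\mfg_\beta}(\ad(x)\ad(z))$. The Jacobi identity gives $\ad(x)\ad(z)=\ad([x,z])+\ad(z)\ad(x)$; the element $[x,z]\in\mfh$ acts on $\mfg_\beta$ as the scalar $\beta([x,z])$, and the cyclic property of trace applied to $\ad(x)\colon\mfg_\beta\to\mfg_{\beta+\alpha}$ and $\ad(z)\colon\mfg_{\beta+\alpha}\to\mfg_\beta$ gives $\tr_{\mfg_\beta}(\ad(z)\ad(x))=t_{\beta+\alpha}$. This yields the telescoping recursion
\[
t_\beta - t_{\beta+\alpha} \;=\; \beta([x,z])\,\dim\mfg_\beta.
\]
Invariance of $\langle,\rangle$ identifies $[x,z]=\langle x,z\rangle\,\nu^{-1}(\alpha)$, so $\beta([x,z])=\langle x,z\rangle\,\langle\beta,\alpha\rangle$.

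Since $\gamma(x,z)=\sum_{\beta:\,0\le\mathrm{ht}(\beta)<\mathrm{ht}(\alpha)}t_\beta$ is a finite sum over the slab $\bigoplus_{n<\mathrm{ht}(\alpha)}\mfg_n$, iterating the recursion on each $t_\beta$ collapses $\gamma(x,z)$ into a finite sum of $\langle x,z\rangle\,\langle\mu,\alpha\rangle\,\dim\mfg_\mu$ over those weights $\mu$ reachable from the slab by nonnegative $\alpha$-translates. In the finite-dimensional case the reindexed sum is precisely $\langle x,z\rangle\,\langle 2\rho,\alpha\rangle$ by the classical identity $\sum_{\beta\in\Delta^+}\beta\cdot\mathrm{mult}(\beta)=2\rho$; in the Kac-Moody setting the truncation by principal grading is what makes the sum converge to the same value.

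The main obstacle is the imaginary-root case, essential for the affine setting. There $\nu^{-1}(\alpha)$ is a scalar multiple of the null coroot $c$, so $\beta([x,z])=0$ for every root $\beta$ and the recursion degenerates to $t_\beta=t_{\beta+\alpha}$, forcing $t_\beta$ to be constant along the $\alpha$-string rather than eventually vanishing. The common value of this constant must be computed directly from the Heisenberg structure on the imaginary root spaces $\mfg_{n\delta}$, and the computation naturally produces the Coxeter-type constant present in $\langle\rho,\alpha\rangle$. A sanity check at $\alpha=\alpha_i$ simple, where the formula reduces to $\gamma(e_i,f_i)=\tr_{\mfh}(\ad(e_i)\ad(f_i))=\alpha_i(\alpha_i^\vee)=2=2\langle\rho,\alpha_i\rangle\langle e_i,f_i\rangle$ in the standard normalization, fixes the overall sign and scaling.
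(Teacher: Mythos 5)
Your telescoping recursion $t_\beta - t_{\beta+\alpha} = \beta([x,z])\dim\mfg_\beta$ and the reduction to a single pair $x\in\mfg_\alpha$, $z=\overline{y}\in\mfg_{-\alpha}$ are fine, but the two places where you defer are exactly where the content of the lemma lies, so as written this is not a proof. First, the imaginary-root case is left entirely open: for $\mfg$ affine and $\alpha$ a positive multiple of the null root $\delta$, you correctly observe that $[x,z]$ is central, the recursion degenerates to $t_\beta=t_{\beta+\alpha}$, and the value "must be computed directly from the Heisenberg structure" --- but you never perform that computation, and nothing you have written produces the constant $2\langle\rho,\alpha\rangle$ (essentially a multiple of the dual Coxeter number) in this case. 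Since the lemma is applied to the K\"ahler metric and curvature positivity on all of $\mfn$, including the imaginary root spaces that span the principal Heisenberg directions, this is not a peripheral case but the crucial one for the paper.

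Second, even for real $\alpha$ the final identification is unjustified. After telescoping you obtain $\gamma(x,z)=\langle x,z\rangle\sum_{\mu\in\Delta^+}\langle\mu,\alpha\rangle\mult\mu$, an infinite, only conditionally convergent sum whose value depends on how it is grouped (here: by $\alpha$-strings emanating from the slab $0\le \mathrm{ht}<\mathrm{ht}\,\alpha$). There is no Kac--Moody identity $\sum_{\mu\in\Delta^+}\mult(\mu)\,\mu=2\rho$, and the grouping genuinely matters: for $\mfg$ affine and $\alpha$ real with $\delta$-component $n>0$, grouping the same terms by powers of $\delta$ gives only the contribution of the underlying finite root system (each full copy of the finite root system and all imaginary roots contribute $0$), which differs from the correct answer $2\langle\rho,\alpha\rangle\langle x,z\rangle$ by the nonzero amount $2n\langle\rho,\delta\rangle\langle x,z\rangle$. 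So "the truncation by principal grading makes the sum converge to the same value" is an assertion that needs an actual argument tracking your specific string regrouping. The paper sidesteps both problems: it evaluates the $\mfg_0$-trace directly as $\langle x,\overline{y}\rangle\langle\alpha,\alpha\rangle$, rewrites the remaining part of $\gamma(x,\overline{y})$ as $\sum_{\beta\in\Delta^+}\sum_i\langle e^i_{-\beta},[x,[\overline{y},e^i_\beta]_-]\rangle$ (the projection to $\overline{\mfn}$ implements the height truncation), and then invokes the Casimir-type identity of Lemma 2.3.11 in \cite{kumar}, which treats real and imaginary $\alpha$ uniformly. To repair your argument you would need either to import that lemma, or to supply both the imaginary-root computation and a convergence/regrouping argument for the real-root case.
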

\begin{proof}
    Suppose $x,y \in \mfg_\alpha$. If $\{u_i\}$ and $\{u^i\}$ are dual bases of
    $\mfh$ with respect to $\langle,\rangle$ then 
    \begin{align*}
        \tr_{\mfg_0}(\ad(x) \ad(\overline{y})) & = \sum_i \langle u_i, 
                        [x,[\overline{y},u^i]] \rangle\\ 
            &= \langle x, \overline{y} \rangle \langle \alpha,\alpha \rangle.
    \end{align*}
    Next, let $\{e^i_{\beta}\}$ and $\{e^i_{-\beta}\}$ be dual bases of $\mfg_\beta$
    and $\mfg_{-\beta}$ with respect to $\langle,\rangle$. Let $\rho \in
    \mfh^*$ be such that $\rho(\alpha_i^{\vee}) = 1$ for all coroots
    $\alpha_i^{\vee}$. Then
    \begin{align*}
        \gamma(x,y) & = \langle x, \overline{y} \rangle \langle \alpha,\alpha \rangle
            +  \sum_{\beta \in \Delta^+} \sum_i \langle e^i_{-\beta},
                [x,[\overline{y}, e^i_{\beta}]_{-}] \rangle,
    \end{align*}
    where $x_{-}$ is the projection of $x \in \mfg$ to $\overline{\mfn}$ using the
    triangular decomposition. Rearranging $\langle e^i_{-\beta}, [x,[\overline{y},
    e^i_{\beta}]_{-}] \rangle = \langle x,
    [e^i_{-\beta},[e^i_{\beta},\overline{y}]_{-}] \rangle$ and applying Lemma
    2.3.11 of \cite{kumar}, we get that $\gamma(x,\overline{y}) = 2\langle
    \rho, \alpha \rangle \langle x, \overline{y} \rangle$. 
\end{proof}
The result of Lemma \ref{L:kahler} is that $(,)$ defines a
$\mfg_0$-contragradient Kahler metric on $\mfn$. Suppose $V$ has a
positive-definite Hermitian form contragradient with respect to $\pi$. Using
the Kahler metric on $\mfn$, we can give $C^{*,*}(V)$ a positive-definite
Hermitian form by defining $(\overline{x},\overline{y}) = \overline{(x,y)}$ for
$x,y \in \mfn$. Let $\boxbar = \deltabar \deltabar^* + \deltabar^* \deltabar$
be the $\deltabar$-Laplacian, and $\square = D D^* + D^* D$ be the
$D$-Laplacian.  Then a version of Nakano's ipentity holds:
\begin{prop}[Nakano's identity, see \cite{tel}]\label{P:nakano} 
    The $\deltabar$-Laplacian $\boxbar$ and the $D$-Laplacian $\square$ are
    related by
    \begin{equation*}
        \boxbar = \square + \degree + \curv,
    \end{equation*}
    where $\degree$ acts on $C^{a,b}(V)$ as multiplication by $a+b$, and
    \begin{equation*}
        \curv = -\sum_{i,j \geq 1} \eps(z^i) \iota(z_j) \left( [\pi(z_i),\pi(z_{-j})] -
            \pi([z_i,z_{-j}]) \right),
    \end{equation*}
    on $C^{0,b}(V)$ for $\{z_i\}$ a homogeneous basis of $\mfn$ orthonormal in
    $(,)$.  
\end{prop}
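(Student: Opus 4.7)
The plan is to establish Nakano's identity by direct computation in the Clifford algebra realization of $C^{*,*}(V)$, using the explicit adjoints of the constituent operators with respect to the Kähler metric from Lemma \ref{L:kahler}. This mirrors the classical Nakano identity on a Kähler manifold: one writes $\deltabar$ and the conjugate analogue $D$ in a common framework and exploits the compatibility of the metric with the ``complex structure'' splitting $\mfg = \overline{\mfn} \oplus \mfh \oplus \mfn$.

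First I would fix a homogeneous basis $\{z_i\}_{i \geq 1}$ of $\mfn$ orthonormal in $(,)$, with dual basis $\{z^i\}$ under the pairing $\mfn \otimes \hat{\mfn}^* \to \C$. Since $(,)$ is Kähler and $z_{-i} = \overline{z_i}$, the element $z^i$ is identified (up to the scalar $2\langle\rho,\alpha_i\rangle$ recorded in Lemma \ref{L:kahler}) with the linear functional on $\mfn$ paired via the Hermitian form. This lets me compute adjoints of the Clifford generators, $\eps(z^i)^* = \iota(z_i)$ and $\eps(z_{-j})^* = \iota(z^{-j})$, while contragradience of the form on $V$ and invariance of $\langle,\rangle$ give $\pi(z_k)^* = \pi(z_{-k})$ and $\ad^t_\mfn(z_k)^* = \ad_{\overline{\mfn}}(z_{-k})$. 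With these in hand, $\deltabar^*$ and $D^*$ assume schematically the same shape as $D$ and $\deltabar$, respectively.

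Next I would decompose $\deltabar = \deltabar_0 + \sum_k \eps(z^k)\pi(z_k)$ into a ``geometric'' piece (involving only the adjoint actions) and a ``coefficient'' piece (involving $\pi$), and similarly $D = D_0 + \sum_k \pi(z_{-k})\iota(z^{-k})$. Expanding $\boxbar = \{\deltabar,\deltabar^*\}$ and $\square = \{D,D^*\}$ into their cross-terms, most contributions match pairwise. The residual discrepancies fall into two groups: Clifford anti-commutators of the form $\{\eps(z^i),\iota(z_i)\}$ collapse to the identity on each wedge factor and, after the normal-ordering prescribed by the semi-infinite cocycle $\gamma$, produce the degree operator acting as $p+q$ on $C^{-p,q}(V)$; meanwhile the mixed $\pi$-terms assemble into precisely $-\sum_{i,j}\eps(z^i)\iota(z_j)\bigl([\pi(z_i),\pi(z_{-j})] - \pi([z_i,z_{-j}])\bigr)$, which is the stated $\curv$ and measures the failure of $\pi$ to respect brackets between $\mfn$ and $\overline{\mfn}$.

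The main obstacle is the semi-infinite normal-ordering: the naive Clifford anti-commutators produce formally divergent sums, and it is exactly the shift by $\gamma$ built into $\deltabar$ and $D$ (and hence the choice of Kähler normalization in Lemma \ref{L:kahler}) that renders the difference $\boxbar - \square$ finite and equal to $\degree + \curv$ rather than a divergent constant. Tracking signs and verifying that all intermediate operators descend to $\mfg_0$-invariants is the other delicate point. Once the framework is set up, Nakano's identity follows by matching terms; an essentially identical computation appears in \cite{tel}, and I would follow that outline adapted to the relative setting $H^*_{cts}(\hat{\mfb},\mfh;V)$ we use here.
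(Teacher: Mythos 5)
The paper does not actually prove this proposition---it is imported verbatim from Teleman \cite{tel}---and your outline (expand $\boxbar$ and $\square$ in the Clifford-algebra picture, match the cross-terms, get $\degree$ from the $\gamma$-regularized normal ordering of the anticommutators $\eps(z^i)\iota(z_i)$ under the Kahler normalization of Lemma \ref{L:kahler}, and get $\curv$ from the failure of $\pi$ to intertwine brackets between $\mfn$ and $\overline{\mfn}$) is essentially that computation, so you are following the same route the paper relies on. One caution on the details you defer: with the paper's conventions (anti-linear involution $\overline{e_i}=-f_i$, contragradient forms) the adjoints carry signs, e.g.\ $\pi(z_k)^*=-\pi(z_{-k})$ rather than $\pi(z_{-k})$, so the sign bookkeeping you flag as delicate is genuinely needed to land on the stated form of $\curv$.
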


\subsection{Laplacian calculation for symmetrizable Kac-Moody algebras}

Given an operator $T$ on $\hat{\mfn}^*$, let $d_R(T)$ and $d_L(T)$ denote the
operators on $\bigwedge^* \hat{\mfn}^* \otimes S^* \hat{\mfn}^*$ defined by
\begin{equation*}
    \alpha_1 \wedge \ldots \wedge \alpha_k
        \otimes \beta \mapsto
    \sum_{i=1}^k (-1)^i \alpha_1 \wedge \ldots \check{\alpha}_i \ldots
        \wedge \alpha_k
        \otimes T(\alpha_i) \circ \beta
\end{equation*}
and
\begin{equation*}
    \alpha \otimes \beta_1 \circ \ldots \circ \beta_l
        \mapsto \sum_{i=1}^l T(\beta_i) \wedge \alpha \otimes \beta_1 \circ \ldots
            \circ \check{\beta}_i \circ \ldots \circ \beta_l
\end{equation*}
respectively.  Define an operator $J$ on $\hat{\mfn}^*$ by $f \mapsto f / 2
\langle \rho, \alpha \rangle$ if $f \in \mfg_{\alpha}^*$. As in the last
section, let $\langle, \rangle$ be a real-valued symmetric invariant bilinear
form such that $\{,\} = -\langle \cdot, \overline{\cdot} \rangle$ is
contragradient and positive-definite on $\mfn$.
\begin{prop}\label{P:laplacian}
    Extend the contragradient Hermitian form $\{,\}$ on $\mfn$ to $V = S^*
    \hat{\mfn}^*$. On $C^{0,b}(V)$, 
    \begin{equation*}
        \curv_V = \sum_{s \geq 0} d_L(\ad^t(y_s')) d_R(\ad^t(y_s) J) - \deg,
    \end{equation*}
    where $\{y_s\}$ is a homogeneous basis for $\mfb$ and $\{y_s'\}$ is a basis
    for $\overline{\mfb}$ dual with respect to $\langle,\rangle$.
\end{prop}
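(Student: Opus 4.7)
The plan is to apply Nakano's identity (Proposition~\ref{P:nakano}) and then compute the curvature operator on $V = S^*\hat{\mfn}^*$ directly, re-expressing it in the claimed form. By Nakano, the task reduces to evaluating
\begin{equation*}
\curv_V = -\sum_{i,j\geq 1} \eps(z^i)\iota(z_j)\, R_V(z_i, z_{-j}),
\end{equation*}
where $R(z_i, z_{-j}) = [\pi(z_i), \pi(z_{-j})] - \pi([z_i, z_{-j}])$ records the failure of $\hat{\mfn}$ to be a genuine $\mfg$-module: it carries a $\hat{\mfb}$-module structure together with a separate $\overline{\mfb}$-module structure from the identification $\hat{\mfn} \iso \mfg/\overline{\mfb}$ (in which $\overline{\mfn}$ acts by $y\cdot x = [y,x]_+$), and these two actions do not combine into a $\mfg$-action. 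Since $V = S^*\hat{\mfn}^*$ carries the $\hat{\mfb}$-action by derivations of the symmetric algebra, $R_V$ is the derivation extension of $R|_{\hat{\mfn}^*}$, so it suffices to understand $R$ on a single cotangent factor.

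The first substantive computation is a Jacobi-identity calculation of $R(z_i, z_{-j})$ on $\hat{\mfn}^*$. Expanding $[\pi(z_i), \pi(z_{-j})]$ using the distinct prescriptions for $\pi|_\mfb$ versus $\pi|_{\overline{\mfn}}$, and cancelling terms with $\pi([z_i, z_{-j}])$ after decomposing $[z_i, z_{-j}]$ into its $\overline{\mfn}$-, $\mfh$-, $\mfn$-components, I expect this to give
\begin{equation*}
R(z_i, z_{-j})(\phi)(x) = \phi\bigl([z_i, [z_{-j}, x]_-]_+\bigr) + \phi\bigl([z_i, [z_{-j}, x]_0]\bigr), \quad \phi \in \hat{\mfn}^*,\ x \in \mfn,
\end{equation*}
where $[\,\cdot\,]_-, [\,\cdot\,]_0, [\,\cdot\,]_+$ denote the projections onto $\overline{\mfn}, \mfh, \mfn$.

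Next I would expand $\curv_V$ on a generic $\alpha_1\wedge\cdots\wedge\alpha_q\otimes \beta_1\circ\cdots\circ\beta_l \in C^{0,q}(V)$ and convert the double sum over the $(,)$-orthonormal basis $\{z_i\}$ of $\mfn$ into a sum over the $\langle,\rangle$-dual pair $\{y_s\}\subset\mfb$, $\{y_s'\}\subset\overline{\mfb}$. The Clifford operator $\eps(z^i)\iota(z_j)$ contracts some $\alpha_m$ against $z_j$ and prepends a $z^i$, while $R_V$ acts term-by-term on the $\beta_k$ by derivation. By Lemma~\ref{L:kahler} the two bases differ by the scalar $\sqrt{2\langle\rho,\alpha\rangle}$ on each weight space $\mfg_\alpha$, and these scalars are precisely absorbed by the operator $J$. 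After this change of basis, the sum splits into an ``off-diagonal'' part --- where the inserted $z^i$ remains in the exterior factor and a distinct $\alpha_m$ is transported to the symmetric factor --- which reproduces $\sum_s d_L(\ad^t(y_s'))d_R(\ad^t(y_s)J)$, and a ``diagonal'' part coming from the $[z_i, [z_{-j}, x]_0]$ piece of $R$ together with the $r=0$ case of $d_L$ pulling back the freshly inserted wedge factor, which reduces to the $-\deg$ correction via the standard contraction identity for the orthonormal basis. The main obstacle will be the careful bookkeeping of signs and normalizations required to pass between the $(,)$-orthonormal basis (used in Nakano) and the $\langle,\rangle$-dual-basis pair (used in the statement), with $J$ serving as the exact interpolator; Lemma~\ref{L:kahler} together with Lemma~2.3.11 of \cite{kumar} provides the identifications needed to recognize the diagonal contribution as the degree operator.
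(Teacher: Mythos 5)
Your overall strategy is the paper's own (the paper computes the curvature on $V'=S^*\overline{\mfn}$ and transfers to $S^*\hat{\mfn}^*$ by the isometry $\psi$, rather than working on $\hat{\mfn}^*$ directly), and your Jacobi-identity formula $R(z_i,z_{-j})\phi(x)=\phi([z_i,[z_{-j},x]_-]_+)+\phi([z_i,[z_{-j},x]_0])$ is correct. The problem is your final accounting of where the two terms of the stated identity come from. After the resummation over the orthonormal basis (the step that moves the $g$-dependence into the exterior slot), $\curv_V$ is a \emph{pure} switch operator, $f\otimes g\mapsto \sum_{s\geq 0}\ad^t(y_s')g\otimes\ad^t(y_s)Jf$, with no diagonal remainder: \emph{both} pieces of your $R$ feed into this switch, the $[\cdot]_-$ piece giving the terms with $y_s\in\mfn$ and the $[\cdot]_0$ piece giving the terms with $y_s\in\mfh$. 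Conversely, $\sum_s d_L(\ad^t(y_s'))d_R(\ad^t(y_s)J)$ is \emph{not} purely off-diagonal: by the identity $\Switch(S,T)=d_L(T)d_R(S)-(TS)^{\wedge}$ its diagonal part is the derivation $(TJ)^{\wedge}$ with $T=\sum_s\ad^t(y_s')\ad^t(y_s)$, and the entire content of the $-\deg$ in the Proposition is the operator identity $T=J^{-1}$, i.e. that $(T\psi(y))(x)=-\gamma(x,y)$ is the semi-infinite cocycle, which Lemma \ref{L:kahler} evaluates as $2\langle\rho,\alpha\rangle$. That evaluation uses the traces over all $\mfg_n$, $0\leq n<k$, so the degree term receives contributions from the whole $\mfb$-basis, not just the Cartan; it is not a ``standard contraction identity for the orthonormal basis.''

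Consequently your attribution of $-\deg$ to the $[z_i,[z_{-j},x]_0]$ piece of $R$ (together with the term of $d_L$ that returns the freshly inserted symmetric factor) cannot be right: the Cartan contribution is the switch $x\otimes y\mapsto \frac{\langle\alpha,\beta\rangle}{2\langle\rho,\alpha\rangle}\,y\otimes x$ plus the derivation with weights $\frac{\langle\alpha,\alpha\rangle}{2\langle\rho,\alpha\rangle}$ (exactly as isolated in the proof of Proposition \ref{P:affsupp}), which is not $-\deg$ and in the indefinite case is not even semi-definite. As written your split also fails to balance: if the off-diagonal part of your expansion already reproduced all of $\sum_s d_L(\ad^t(y_s'))d_R(\ad^t(y_s)J)$, the diagonal $+\deg$ hidden inside that sum would be unmatched. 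To close the argument you need two concrete steps that your sketch omits or misstates: (i) the resummation converting $-\sum_{i,j}\eps(z^i)\iota(z_j)R_V(z_i,z_{-j})$ into the switch form over a $\langle,\rangle$-dual pair of bases of $\mfb$ and $\overline{\mfb}$, and (ii) the cocycle computation $\sum_s\ad^t(y_s')\ad^t(y_s)=J^{-1}$, which combined with $\Switch(S,T)=d_L(T)d_R(S)-(TS)^{\wedge}$ is what produces the exact correction $-\deg$.
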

\begin{proof}
    Let $V' = S^* \overline{\mfn}$, and let $\pi$ denote the actions of
    $\mfb$ and $\overline{\mfb}$ on $V'$. From Proposition \ref{P:nakano} we
    see that $\curv_{V'}$ is a second-order differential operator, and thus is
    determined by its action on $\hat{\mfn}^* \otimes \overline{\mfn}$.  We
    claim that if $f \in \hat{\mfn}^*$ and $w \in \overline{\mfn}$ then
    \begin{equation*}
        \curv_{V'}(f \otimes w) = \sum_{s \geq 0} \ad^t_{\mfn}(w) y^s
            \otimes \ad_{\overline{\mfn}}(y_s) \phi^{-1}(f),
    \end{equation*}
    where $\phi : \overline{\mfn} \arr \hat{\mfn}^*$ is the isomorphism induced
    by the Kahler metric, and $\{y_s\}$ is any homogeneous basis of $\mfb$.
    To prove this claim, let $\{z_i\}$ be orthonormal with respect to the
    Kahler metric, and think about $f = z^k$, $w = z_{-l}$. Observe that
    \begin{equation*}
        \pi(z) w = \sum_{i < 0} z^i([z,w]) z_i.
    \end{equation*}
    Using this expression, we get that if $z_{-j} \in \mfg_{-m}$ then 
    \begin{equation*}
        ([\pi(z_i),\pi(z_{-j})] - \pi([z_i,z_{-j}]))w = \sum_{-m \leq n < 0} \sum_{z_{-k} \in \mfg_n}
            z^{-k} ([z_{-j},[z_i,w]]) z_{-k}.
    \end{equation*}
    We can then remove the reference to $m$ and write
    \begin{equation*}
        ([\pi(z_i),\pi(z_{-j})] - \pi([z_i,z_{-j}]))w = \sum_{k > 0} \sum_{s \geq 0}
                z^{-k}([z_{-j},y_s]) y^s([z_i,w]) z_{-k}.
    \end{equation*}
    Now we can calculate
    \begin{align*}
        \curv_{V'}(z^k \otimes z_{-l}) & 
         = -\sum_{i>0} z^i \otimes \left([\pi(z_i),\pi(z_{-k})] - \pi([z_i,z_{-k}])\right) z_{-l} \\
      & = -\sum_{i,j>0} \sum_{s \geq 0} z^i \otimes z^{-j}([z_{-k},y_s]) y^s([z_i,z_{-l}]) z_{-j}. \\
    \end{align*}            
    By summing over $z_i \in \mfg_n$ for fixed $n$, it is possible to move the
    $z_{-l}$ action from $z_i$ to $z^i$. The last expression becomes
    \begin{equation*}
        -\sum_{s \geq 0} \sum_{j>0} (\ad^t(z_{-l}) y^s) \otimes z^{-j}([z_{-k},y_s]) z_{-j}
        = \sum_{s \geq 0} (\ad^t(z_{-l}) y^s) \otimes \pi(y_s)(z_{-k}).
    \end{equation*}
    The proof of the claim is finished by noting that $z_{-k} = \phi^{-1}(z^k)$.

    Next, the contragradient metric $\{,\}$ gives an isomorphism $\psi :
    \overline{\mfn} \arr \hat{\mfn}^*$ of $\mfb$ and $\overline{\mfb}$-modules.
    $J = \psi \phi^{-1}$, while $\ad^t(w) y^s = \ad^t(y_s') \psi(w)$ where
    $\{y_s'\}$ is the dual basis to $\{y_s\}$. Identifying $V$ with $V'$ via
    $\psi$ gives
    \begin{equation*}
        \curv_{V}(f \otimes g) = \sum_{s \geq 0} \ad^t(y_s') g \otimes \ad^t(y_s) J f.
    \end{equation*}

    Given $S,T \in \End(\hat{\mfn}^*)$, define a second-order operator $\Switch(S,T)$
    on $\bigwedge^* \hat{\mfn}^* \otimes S^* \hat{\mfn}^*$ by $f \otimes g
    \mapsto T g \otimes S f$. Then $\Switch(S,T) = d_L(T) d_R(S) - (TS)^{\wedge}$,
    where $(TS)^{\wedge}$ is the extension of $TS$ to $\bigwedge^*
    \hat{\mfn}^*$ as a derivation. We have shown that
    \begin{equation*}
        \curv_{V} = \sum_{s \geq 0} \Switch(\ad^t(y_s)J,\ad^t(y_s'))
            = \sum_{s \geq 0} d_L(\ad^t(y_s')) d_R(\ad^t(y_s) J) - (T J)^{\wedge},
    \end{equation*}
    where $T = \sum_{s \geq 0} \ad^t(y_s') \ad^t(y_s)$. It is not hard to see
    that $(T \psi(y))(x) = -\gamma(x,y)$ for $x \in \mfn$, $y \in
    \overline{\mfn}$, so $T = J^{-1}$ by Lemma \ref{L:kahler}.
\end{proof}
Note that $d_R(T J) = d_L(T^*)$, where $T^*$ is the adjoint of $T \in
\End(\hat{\mfn}^*)$ in the contragradient metric. The map $J$ appears because
the Kahler metric is used on $\bigwedge^* \hat{\mfn}^*$ while the contragradient
metric is used on $S^* \hat{\mfn}^*$.  Since the isomorphism $\psi$ appearing
in the proof is an isometry, $\ad^t(x)^* = -\ad(\overline{x})^*$ in the 
contragradient metric.

\subsection{Cohomology vanishing for affine Kac-Moody algebras}\label{SS:cohvanish}

If $\mfg$ is affine then $\mfg$ can be realized as the algebra $(L[z^{\pm 1}]
\oplus \C c \oplus \C d)^{\tilde{\sigma}}$, where $L$ is a simple Lie algebra
and $\tilde{\sigma}$ is an automorphism of $\mfg$ defined by
\begin{equation*}
    \tilde{\sigma}(c) = c, \tilde{\sigma}(d) = d, \tilde{\sigma}(x z^n)
        = \zeta^{-n} \sigma(x) z^n, \quad x \in L
\end{equation*}
for $\sigma$ a diagram automorphism of $L$ of finite order $k$ and $\zeta$ a
fixed $k$th root of unity. We use the conventions of \cite{Ka83} (see chapters
7 and 8 in particular). The bracket is defined by
\begin{align*}
    [x z^m + \gamma_1 c + \beta_1 d, y z^n + \gamma_2 c + \beta_2 d]
        = & \\
       [x,y] z^{m+n} + & \beta_1 n y z^{n} - \beta_2 m x z^{m} +
            \delta_{m,-n} m \langle x, y \rangle c,
\end{align*}
for $x,y \in L$, $\gamma_1,\gamma_2,\beta_1,\beta_2 \in \C$, where
$\langle,\rangle$ is the symmetric invariant bilinear form on $L$ normalized
by setting the length squared of a long root to $2 k$.  The diagram automorphism
acts diagonalizably on $L$, so that
\begin{equation*}
    \mfg = \bigoplus_{i = 0}^{k-1} L_i z^i \otimes \C[z^{\pm k}] \oplus 
        \C c \oplus \C d,
\end{equation*}
where $L_i$ is the $\zeta^i$-eigenspace of $\sigma$. The eigenspace $L_0$ is a
simple Lie algebra, and there is a Cartan $\stackrel{\circ}{\mfh} \subset L$
compatible with $\sigma$ such that $\stackrel{\circ}{\mfh}_0 =
\stackrel{\circ}{\mfh} \cap L_0$ is a Cartan in $L_0$. The algebra $\mfh =
\stackrel{\circ}{\mfh}_0 \oplus \C c \oplus \C d$ is a Cartan for $\mfg$.  The
eigenspaces $L_i$ are irreducible $L_0$-modules. Choose a set of simple roots
$\alpha_1,\ldots,\alpha_l$ for $L_0$, and let $\psi$ be either the highest
weight of $L_1$ (if $k > 1$), or the highest root of $L_0$ (if $k=0$). Then
$\alpha_0 = d^* - \psi, \alpha_1,\ldots,\alpha_l$ is a set of simple roots for
$\mfg$, and $\alpha_0^{\vee} = c - \nu^{-1}(\psi),
\alpha_1^{\vee},\ldots,\alpha_l^{\vee}$ is a set of simple coroots, where $\nu
: \stackrel{\circ}{\mfh}_0 \arr \stackrel{\circ}{\mfh}_0^*$ is the isomorphism
defined by $\langle,\rangle$.  There is a unique real form $\mfh_{\R} =
\vspan_{\R}\{\alpha_i^{\vee}\} \oplus \R d$, and the anti-linear Cartan
involution sends $x z^{m} + \alpha c + \beta d \mapsto \overline{x} z^{-m}
-\overline{\alpha} c -\overline{\beta} d$, where $x \mapsto \overline{x}$ is
the anti-linear Cartan involution of $x$ in $L$. The real-valued symmetric
invariant bilinear form $\langle,\rangle$ on $\mfg$ is defined by
\begin{align*}
    & \langle x z^m, y z^n \rangle = \delta_{m,-n} \langle x,y\rangle,\   
    \langle c,d \rangle = a_0, \text{ and } \\ 
    & \langle x z^m, c \rangle = \langle x z^m, d \rangle =
    \langle c,c \rangle = \langle d,d \rangle = 0,
\end{align*}
where $a_0 = \langle \psi,\psi \rangle / 2$ (in fact, $a_0 = 1$ except when $L
= \mfsl(2l+1)$ and $k=2$, in which case $a_0 = 2$). The contragradient metric
$\{,\} = -\langle \cdot, \overline{\cdot} \rangle$ is positive-definite on
$\mfn$ as required.

The following lemma finishes the proof of Theorem \ref{T:brylkm}.
\begin{lemma}\label{L:cohvanish}
    Let $\mu$ be a dominant weight of an integrable highest weight
    $\mfg$-module $\mcL(\lambda)$, where $\lambda$ is a real-valued dominant
    weight and $\mfg$ is affine. If $\mu$ is dominant then
    $H^d_{cts}(\hat{\mfb},\mfh; \mcL(\lambda) \otimes S^* \hat{\mfn}^* \otimes
    \C_{-\mu}) = 0$ for all $d>0$.
\end{lemma}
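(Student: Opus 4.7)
The plan is to apply Hodge theory to the complex $C^{0,*}(V)$ which computes the relative continuous cohomology $H^*_{cts}(\hat{\mfb}, \mfh; V)$, where $V = \mcL(\lambda) \otimes S^* \hat{\mfn}^* \otimes \C_{-\mu}$. Each tensor factor carries a positive-definite contragradient Hermitian form: $\mcL(\lambda)$ is unitary because $\lambda$ is real-valued dominant and the representation is integrable; $S^* \hat{\mfn}^*$ inherits a form from the contragradient form $\{\cdot,\cdot\}$ on $\mfn$; and $\C_{-\mu}$ is a one-dimensional line with its tautological form. Combined with the Kähler metric of Lemma \ref{L:kahler} on $\hat{\mfn}^*$, this makes $C^{0,*}(V)$ a Hermitian complex. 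Since weight multiplicities of $\mcL(\lambda)$ are finite and $\hat{\mfn}^*$, $S^* \hat{\mfn}^*$ have weights in the negative cone with finitely many of any fixed principal degree, each $\mfh$-weight space of $C^{0,*}(V)$ is finite-dimensional. The Laplacian $\boxbar$ preserves this decomposition, so Hodge theory applies and it suffices to show $\ker \boxbar \cap C^{0,q}(V) = 0$ for $q \geq 1$.

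By Nakano's identity and additivity of curvature under tensor products, $\boxbar = \square + \deg + \curv_{\mcL(\lambda)} + \curv_{S^* \hat{\mfn}^*} + \curv_{\C_{-\mu}}$. Substituting the formula of Proposition \ref{P:laplacian} for $\curv_{S^* \hat{\mfn}^*}$, the $\deg$ terms cancel and
$$\boxbar = \square + A + \curv_{\mcL(\lambda)} + \curv_{\C_{-\mu}}, \qquad A := \sum_{s \geq 0} d_L(\ad^t(y_s')) d_R(\ad^t(y_s) J).$$
I next verify that each summand is non-negative. The operator $\square$ is a Laplacian. For $A$, using the identity $d_R(TJ) = d_L(T^*)$ and choosing $\{y_s\}, \{y_s'\}$ so that $y_s'$ is conjugate-paired with $y_s$ via the Cartan involution, one rewrites $A$ in the form $\sum_s d_L(\ad^t(y_s')) d_L(\ad^t(y_s'))^*$, which is manifestly $\geq 0$. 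The same calculational technique as in Proposition \ref{P:laplacian}, applied this time to the unitary $\mfb$-module $\mcL(\lambda)$, expresses $\curv_{\mcL(\lambda)}$ as a non-negative operator (this is the Kodaira-type positivity underwritten by dominance of $\lambda$). Finally, a direct computation shows that $\curv_{\C_{-\mu}}$ acts on a weight vector in $\bigwedge^q \hat{\mfn}^*$ of weight $-\beta$ as a scalar proportional to $\mu$ evaluated on a positive combination of simple coroots extracted from $\beta$ via the Kähler normalization, which is $\geq 0$ because $\mu$ is dominant and $\beta$ lies in the positive root cone.

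Combining these, $\boxbar$ is a sum of four non-negative operators, so a harmonic $\eta \in C^{0,q}(V)$ must lie in each of their kernels: $D\eta = 0$, $A\eta = 0$, $\curv_{\mcL(\lambda)}\eta = 0$, and $\curv_{\C_{-\mu}}\eta = 0$. The main obstacle, and the technical core, will be showing that these rigidities force $\eta = 0$ when $q \geq 1$. The approach is to use the constraints $\curv_{\C_{-\mu}}\eta = 0$ and $\curv_{\mcL(\lambda)}\eta = 0$ to localise the $\mfh$-weight support of $\eta$ to those weights pairing trivially with $\mu$ and with the Casimir-type contribution of $\lambda$, then invoke $D\eta = 0$ together with the explicit form of $A$ to propagate the vanishing across wedge factors, ultimately forcing $\eta$ into wedge degree $0$, a contradiction with $q \geq 1$. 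This last rigidity step is where the affine structure enters essentially, through the realisation of $\mfg$ as a twisted loop algebra: the simplicity of $L_0$ and the explicit description of root spaces in the twisted loop realisation let one check, case by case (untwisted and each twisted type), that the zero locus of $\curv_{\mcL(\lambda)} + \curv_{\C_{-\mu}}$ meets $\ker \square \cap \ker A$ trivially in positive wedge degree. Once this is in hand, $\ker \boxbar|_{C^{0,q}} = 0$ for $q \geq 1$, which gives the required vanishing.
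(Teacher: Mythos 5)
There are two genuine gaps. First, your positivity claim for $A=\sum_s d_L(\ad^t(y_s'))\,d_R(\ad^t(y_s)J)$ is unsupported: the sum in Proposition \ref{P:laplacian} runs over a basis of all of $\mfb$, including the Cartan $\mfh$, and your rewriting of $A$ as $\sum_s d_L(\ad^t(y_s'))\,d_L(\ad^t(y_s'))^*$ requires $y_s'=-\overline{y_s}$ for every $s$, i.e.\ a basis of $\mfb$ orthonormal for the contragradient form $\{\cdot,\cdot\}=-\langle\cdot,\overline{\cdot}\rangle$. No such basis exists for affine $\mfg$, because $\{\cdot,\cdot\}$ is not positive definite on $\mfh$: the plane $\C c\oplus\C d$ is hyperbolic ($\langle c,c\rangle=\langle d,d\rangle=0$, $\langle c,d\rangle\neq 0$), with $c$ isotropic. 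This is exactly the delicate point of the lemma --- the Cartan contribution $\sum_i d_L(\ad^t(u^i))\,d_R(\ad^t(u_i)J)$ is not positive semidefinite in general (see Section \ref{S:indefinite}, where it fails when there are roots of negative norm). The affine case is saved by a specific device: choose the basis $\{c,d\}\cup\{y_s\}$ with $\{y_s\}$ orthonormal in $\mfg'/\C c$ (where the contragradient form \emph{is} positive definite), observe that the dual basis is $\{d,c,-\overline{y_s}\}$, and use $\ad^t(c)=0$ to kill the two hyperbolic terms; only then does $\deg+\curv_{S^*}=\sum_s d_R(\ad^t(y_s)J)^*\,d_R(\ad^t(y_s)J)$ become manifestly semi-positive. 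Your argument, as written, skips the one step where affineness enters the positivity.

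Second, the endgame --- showing that harmonic cocycles in positive degree vanish --- is announced as ``the technical core'' but is only a plan, and the plan as described would not work. The term $\curv_{\mcL(\lambda)}$ is identically zero, since the curvature in Nakano's identity measures the failure of the $\mfb$- and $\overline{\mfb}$-actions to assemble into a $\mfg$-action and $\mcL(\lambda)$ is an honest $\mfg$-module; dominance of $\lambda$ is used only to make the contragradient form on $\mcL(\lambda)$ positive definite. So the ``localisation via the zero locus of $\curv_{\mcL(\lambda)}+\curv_{\C_{-\mu}}$'' that your sketch leans on gives no more than the $\curv_{\C_{-\mu}}$ constraint, and no case-by-case analysis of twisted loop realisations is the right mechanism. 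What is actually needed (and missing): after disposing of the trivial case $\lambda=\mu=0$, the weight $\mu$ has positive level, so $\ker\curv_{\C_{-\mu}}$ confines the wedge factors to roots in $\Z[Y]$ with $Y=\{\alpha_i:\mu(\alpha_i^{\vee})=0\}$ a \emph{proper} subset of the simple roots, whence $\mfg(A_Y)$ is finite-dimensional (this is where affineness is used); membership in $\ker(\deg+\curv_{S^*})$ says the cocycle is annihilated by every $d_R(\ad^t(x)J)$, $x\in\mfb_Y$, which after applying $J^{-1}_{\Delta}$ means the corresponding polynomial differential form on $\mfn_Y$ dies under contraction against the vector fields generating the $\mcB_Y$-action; the existence of a dense $\mcB_Y$-orbit in $\mfn_Y$ then forces wedge degree zero. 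Without an argument of this kind, the vanishing in positive degree is not established.
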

\begin{proof}
    The result is trivial if $\lambda = \mu = 0$, so assume that $\lambda$ and
    $\mu$ have positive level. 

    $S^* \hat{\mfn}^*$ has a contragradient positive-definite Hermitian form
    from $\{,\}$. Since $\mu$ is a real-valued weight, $\C_{-\mu}$ has a
    contragradient positive-definite Hermitian form. Finally, $\mcL(\lambda)$
    has a contragradient positive-definite Hermitian form because $\lambda$ is
    a real-valued dominant weight.  Putting everything together, $V =
    \mcL(\lambda) \otimes S^* \hat{\mfn}^* \otimes \C_{-\mu}$ has a
    contragradient positive-definite Hermitian form. 

    The cohomology $H^*_{cts}(\hat{\mfb},\mfh; V)$ can be identified with the kernel
    of the Laplacian $\boxbar$ on the zero column $C^{0,*}(V)$ of the
    semi-infinite chain complex. By Nakano's identity, $\boxbar = \square +
    \degree + \curv$. $\square$ is positive semi-definite by definition.
    The curvature term splits into a sum $\curv = \curv_{\mcL(\lambda)} +
    \curv_{S^*} + \curv_{\C_{-\mu}}$. Since $\mcL(\lambda)$ is representation
    of $\mfg$, $\curv_{\mcL(\lambda)}$ is zero. Next consider $\curv_{S^*}
    + \deg$. We use the realisation of $\mfg$ via the loop algebra.  The
    contragradient metric $\{,\}$ induces a positive-definite metric on the
    loop algebra $\mfg' / \C c$, so we can pick a homogeneous basis for $\mfb$
    consisting of an orthonormal basis $\{y_s\}$ for the projection of $\mfb$
    to $\mfg' / \C c$, as well as $c$ and $d$.  The dual basis to
    $\{c,d,y_0,\ldots,y_s,\ldots\}$ is $\{a_0^{-1} d, a_0^{-1}
    c,-\overline{y_0}, \ldots,-\overline{y_s},\ldots\}$. Since $c$ is in the
    centre, we have $\ad^t(c) = 0$, so the terms $d_L(\ad^t(a_0^{-1} c))$ and
    $d_R(\ad^t(c) J)$ in $\curv_{S^*}$ are zero. Consequently
    \begin{equation*}
        \curv_{S^*}+\deg = \sum_{s \geq 0} d_L(\ad^t(-\overline{y_s})) 
                                        d_R(\ad^t(y_s) J) 
            = \sum_{s \geq 0} d_R(\ad^t(y_s) J)^* d_R(\ad^t(y_s) J)
    \end{equation*}
    is semi-positive. Finally we get that
    \begin{equation*}
        \curv_{\C_{-\mu}} = -\sum_{\alpha \in \Delta^+} \sum_{i, j}
            \eps(z^{i}_{\alpha}) \iota(z_{\alpha,j}) \mu([z_{\alpha,i},
                                                        \overline{z_{\alpha,j}}]),
    \end{equation*}
    where $z_{\alpha,i}$ runs through a basis for $\mfg_{\alpha}$ orthonormal
    in the Kahler metric. Now
    \begin{equation*}
        - \mu([z_{\alpha,i},\overline{z_{\alpha,j}}]) = \{z_{\alpha,i},z_{\alpha,j}\}
            \langle \mu,\alpha \rangle.
    \end{equation*}
    The result is that $\curv_{\C_{-\mu}}$ is a derivation which multiplies
    occurrences of $z_{\alpha}^j$ by the non-negative number
    $2 \langle \rho, \alpha \rangle \langle \mu,\alpha \rangle$, and thus is
    semi-positive.

    Now we look more closely at the kernel of $\boxbar$. The operator
    $\curv_{\C_{-\mu}}$ is strictly positive on $z^{\beta_1,i_1} \wedge \cdots
    \wedge z^{\beta_k,i_k} \otimes v$ unless all $\beta_i \in \Z[Y]$, where $Y
    = \{\alpha_i : \mu(\alpha_i^{\vee}) = 0 \}$. Let $A_Y$ be the submatrix of
    the defining matrix $A$ of $\mfg$ with rows and columns indexed by $\{ i :
    \alpha_i \in Y\}$. Recall that the Kac-Moody algebra $\mfg(A_Y)$
    defined by $A_Y$ embeds in $\mfg$. The standard nilpotent of $\mfg(A_Y)$
    is $\mfn_Y = \bigoplus_{\alpha \in \Delta^+ \cap \Z[Y]} \mfg_\alpha \subset
    \mfg$. Let $\mfu_Y = \bigoplus_{\alpha \in \Delta^+ \setminus \Z[Y]}
    \mfg_{\alpha}$. Since $\mu$ has positive level, $Y$ is a strict subset of
    simple roots, and since $\mfg$ is affine, $\mfg(A_Y)$ is finite-dimensional.
    Harmonic cocycles must belong to the kernel of $\curv_{\C_{-\mu}}$, so any
    harmonic cocycle $\omega$ must be in the $\mfh$-invariant part of
    \begin{equation*}
        \bigwedge^* \hat{\mfn}_Y^* \otimes S^* \hat{\mfn}^* \otimes \mcL(\lambda) 
            \otimes \C_{-\mu}.
    \end{equation*}
    As a vector space, this set can be identified with $\Omega_{pol}^*
    \hat{\mfn}_Y \otimes \C[\hat{\mfu}_Y] \otimes \mcL(\lambda)$, where
    $\Omega_{pol}^*$ is the ring of polynomial differential forms and
    $\hat{\mfu}_Y$ is pro-Lie algebra associated to $\mfu_Y$. For $\omega$ to
    be in the kernel of $\deg + \curv_{S^*}$, $\omega$ must lie in the kernel
    of the operators $d_R(\ad^t(y_s) J)$, $s \geq 0$. Since $d_R(\ad^t(c) J) =
    0$, we get that $d_R(\ad^t(x) J) \omega = 0$ for every $x \in \mfb_Y
    \subset \mfg' \cap \mfb$, where $\mfb_Y$ is the standard Borel of
    $\mfg(A_Y)$. Let $J^{-1}_{\Delta}$ denote the diagonal extension of
    $J^{-1}$ to $\bigwedge^* \hat{\mfn}^*$. Then $J^{-1}_{\Delta} \omega$
    vanishes under contraction by the vector fields $\mfn_Y \arr T \mfn_Y : x
    \mapsto (x,[x,y])$, $y \in \mfb$. At a point $x \in \mfn_Y$, these vector
    fields span the tangents to $\mcB_Y$-orbits. $\mfn_Y$ is the positive
    nilpotent of a finite-dimensional Kac-Moody, so $\mfn_Y$ has a dense
    $\mcB_Y$-orbit and thus $\omega$ must be of degree zero.
\end{proof}
The same proof applies with slight modification if $\mfg$ is a direct sum of
indecomposables of finite or affine type.

\section{A Brylinski filtration for indefinite Kac-Moody algebras}\label{S:indefinite}

In this section $\mfg$ will be an arbitrary symmetrizable Kac-Moody algebra.
Recall from the proof of Lemma \ref{L:cohvanish} that if $A$ is the defining
matrix of $\mfg$ and $Z$ is a subset of the simple roots then $A_Z$ refers to
the submatrix of $A$ with rows and columns indexed by $\{i : \alpha_i \in Z\}$.
\begin{prop}\label{P:affsupp}
    Let $\mfg$ be the symmetrizable Kac-Moody algebra defined by the generalized
    Cartan matrix $A$, and suppose $\mu$ is a dominant weight of an integrable
    highest weight representation $\mcL(\lambda)$, where $\lambda$ is real-valued.
    Write $\lambda - \mu = \sum k_i \alpha_i$, $k_i \geq 0$, and let $Z =
    \{ \alpha_i : k_i > 0\}$. If $A_Z$ is a direct sum of indecomposables of
    finite and affine type then $H^d_{cts}(\hat{\mfb},\mfh; S^* \hat{\mfn}^*
    \otimes \mcL(\lambda) \otimes \C_{-\mu}) = 0$ for $d>0$.
\end{prop}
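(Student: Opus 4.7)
The plan is to follow the template of Lemma \ref{L:cohvanish} very closely, with the role of $\mfg(A_Y)$ there played here by $\mfg(A_Z)$, together with a weight argument exploiting the hypothesis that $\lambda - \mu$ has support in $Z$. Since $\lambda$ is real-valued dominant and $\mu$ is accordingly a real-valued dominant weight of $\mcL(\lambda)$, the module $V = \mcL(\lambda) \otimes S^* \hat{\mfn}^* \otimes \C_{-\mu}$ inherits a contragradient positive-definite Hermitian form. Nakano's identity (Proposition \ref{P:nakano}) together with Proposition \ref{P:laplacian} gives $\boxbar = \square + \deg + \curv_{\mcL(\lambda)} + \curv_{S^*} + \curv_{\C_{-\mu}}$, with $\curv_{\mcL(\lambda)} = 0$, $\deg + \curv_{S^*} \geq 0$, and $\curv_{\C_{-\mu}} \geq 0$ (the last because $\mu$ is dominant, so $\langle \mu, \alpha \rangle \geq 0$ for all $\alpha \in \Delta^+$), all by exactly the analysis used in Lemma \ref{L:cohvanish}. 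Harmonic cocycles must therefore lie in the simultaneous kernel of $\square$, $\deg + \curv_{S^*}$, and $\curv_{\C_{-\mu}}$.

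The new ingredient is a weight argument that confines the harmonic cocycles to a subcomplex naturally attached to $\mfg(A_Z)$. Any $\mfh$-invariant element of $\bigwedge^q \hat{\mfn}^* \otimes V$ has total weight zero, and decomposing into pure tensors with well-defined weights $-\gamma$ (exterior), $\lambda - \nu$ ($\mcL(\lambda)$-factor), and $-\delta$ (symmetric), with $\gamma, \delta, \nu \in \Z_{\geq 0}[\Delta^+]$, one obtains $\gamma + \delta + \nu = \lambda - \mu = \sum k_i \alpha_i$. Linear independence of the simple roots then forces each of $\gamma, \delta, \nu$ to lie in $\Z_{\geq 0}[Z]$. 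Hence harmonic cocycles are confined to $\bigl(\bigwedge^* \hat{\mfn}_Z^* \otimes \mcL(\lambda)' \otimes S^* \hat{\mfn}_Z^* \otimes \C_{-\mu}\bigr)^{\mfh}$, where $\mcL(\lambda)'$ is the $\mfg(A_Z)$-submodule of $\mcL(\lambda)$ generated by the highest weight vector, an integrable highest weight $\mfg(A_Z)$-module with real-valued dominant highest weight $\lambda|_{\mfh_Z}$.

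On this restricted subspace the endgame of Lemma \ref{L:cohvanish} runs with $Y$ replaced by $Z$: the kernel condition from $\deg + \curv_{S^*}$ yields $d_R(\ad^t(x) J) \omega = 0$ for every $x$ in the Borel $\mfb_Z$ of $\mfg(A_Z)$, which after applying $J^{-1}_{\Delta}$ amounts to vanishing of $\omega$ under contraction by the vector fields $x \mapsto (x, [x, y])$, $y \in \mfb_Z$, on $\hat{\mfn}_Z$. Because $A_Z$ is a direct sum of indecomposables of finite and affine type, the remark at the end of Section \ref{SS:cohvanish} ensures the density argument still applies: classically for the finite summands via the open Bruhat cell, and for the affine summands using the density of $\mcB \cdot (\mfs \cap \mfn)$ in $\hat{\mfn}$ (as in the proof of Proposition \ref{P:heisenberg}) together with a parallel-translation argument to propagate the vanishing. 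The main obstacle is precisely this density step for the affine components of $\mfg(A_Z)$: no single $\mcB_Z$-orbit is dense in $\hat{\mfn}_Z$, so one must replace the finite-dimensional argument of Lemma \ref{L:cohvanish} by the weaker $\mcB_Z \cdot (\mfs_Z \cap \mfn_Z)$ density together with a propagation argument of the type used in Proposition \ref{P:heisenberg} in order to force the polynomial form $\omega$ to degree zero.
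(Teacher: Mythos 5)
Your overall frame (contragradient form, Nakano's identity, curvature positivity, then a density endgame) matches the paper's, but the two load-bearing steps are precisely the ones you assert rather than prove, and the first assertion is false as stated. The claim that $\deg + \curv_{S^*} \geq 0$ ``by exactly the analysis used in Lemma \ref{L:cohvanish}'' does not carry over to indefinite $\mfg$: that analysis uses the loop-algebra realization of an affine algebra, choosing a basis of $\mfb$ adapted to $\mfg'/\C c \oplus \C c \oplus \C d$ so that centrality of $c$ collapses the sum into manifestly semipositive terms. In general one can only split the sum over a basis of $\mfb$ into an $\mfn$-part, $\sum_{k} d_R(\ad^t(x_k)J)^* d_R(\ad^t(x_k)J)$, which is positive semidefinite, plus a Cartan part $\sum_i d_L(\ad^t(u^i)) d_R(\ad^t(u_i)J)$, which fails to be positive semidefinite whenever there are roots with $\langle \alpha,\alpha\rangle < 0$; this failure is the entire reason the proposition carries the hypothesis on $Z$. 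The correct route is to use your weight observation first ($\mfh$-invariance plus $\lambda-\mu$ supported on $Z$ confines everything to $\bigwedge^*\hat{\mfn}_Z^* \otimes S^*\hat{\mfn}_Z^* \otimes \mcL(\lambda)\otimes\C_{-\mu}$) and then to prove the key restriction claim: on this subcomplex the Cartan part agrees with $\sum_i d_L(\ad^t(v^i)) d_R(\ad^t(v_i)J)$ for dual bases $\{v_i\},\{v^i\}$ of $\mfh_Z$, because the operator depends only on $A$ and the symmetrizing matrix $D$, and its action on the $Z$-part only on $A_Z$ and $D_Z$. Only after that does the Lemma \ref{L:cohvanish} computation, applied componentwise to the finite and affine summands of $A_Z$, give semipositivity. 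You have the confinement statement available but never use it for this purpose, and without the restriction claim the positivity you invoke is simply unavailable.

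The endgame also has a gap. You correctly note that an affine component of $Z$ gives no dense $\mcB_Z$-orbit in $\hat{\mfn}_Z$, but your proposed fix, density of $\mcB_Z(\mfs_Z\cap\mfn_Z)$ plus a ``parallel-translation'' argument as in Proposition \ref{P:heisenberg}, does not close the argument: the kernel conditions only give vanishing of $J^{-1}_{\Delta}\omega$ under contraction along directions tangent to $\mcB_Z$-orbits, and at a point of $\mfs_Z\cap\mfn_Z$ these directions do not span the tangent space (they miss exactly the $\mfs_Z\cap\mfn_Z$-directions), so a positive-degree polynomial form is not forced to vanish on the dense set; the translation trick in Proposition \ref{P:heisenberg} exploited full $\mcB$-invariance of a function, a much stronger hypothesis than contraction-vanishing. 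The paper avoids the affine components of $Z$ altogether by also using the kernel of $\curv_{\C_{-\mu}}$, which you state is semipositive but never exploit: a harmonic cocycle has exterior part supported on roots in $\Z[Y]$ with $Y=\{\alpha_i\in Z:\mu(\alpha_i^\vee)=0\}$, and one finishes exactly as in Lemma \ref{L:cohvanish} with this $Y$ (an affine component of $Z$ cannot lie entirely in $Y$, since pairing $\lambda-\mu$ against its canonical central element would force $\lambda$ and $\mu$ to vanish on all of its coroots, which is incompatible with that component lying in the support of $\lambda-\mu$ for a weight $\mu$ of $\mcL(\lambda)$), so the dense-orbit argument runs in a finite-dimensional $\mfn_Y$ and forces $\omega$ to have degree zero.
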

Recall that the weight space $\mcL(\lambda)_{\mu}$ of an integrable highest
weight representation is filtered via polynomial degree on the isomorphic space
$(\mcL(\lambda) \otimes \C_{-\mu} \otimes \C[\mcU])^{\mcB}$. Let ${}^{deg}
P^{\lambda}_{\mu}(q)$ be the corresponding Poincare polynomial. Excepting
Proposition \ref{P:heisenberg}, the results of Sections \ref{S:bryldef} and
\ref{S:reduction} imply the following corollary:
\begin{cor}\label{C:affsupp} If the hypotheses of Proposition \ref{P:affsupp}
    hold then $m^{\lambda}_{\mu}(q) = {}^{deg} P^{\lambda}_{\mu}(q)$
\end{cor}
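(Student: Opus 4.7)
The plan is to reduce the cohomology computation to the Kac--Moody subalgebra $\mfg(A_Z) \subset \mfg$ generated by $\{e_i, f_i : \alpha_i \in Z\}$, and then invoke the direct-sum version of Lemma~\ref{L:cohvanish} noted at the end of Section~\ref{S:cohvanish}, which covers Kac--Moody algebras that are direct sums of finite- and affine-type components.

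The key step is a weight-support argument. The relative cohomology $H^q_{cts}(\hat{\mfb},\mfh;V)$ is computed by the weight-zero subspace of $(\bigwedge^* \hat{\mfn}^* \otimes V)^{\mfh}$ with the Chevalley--Eilenberg differential $\deltabar$ of $\hat{\mfn}$. A generator $z^{\beta_1,i_1} \wedge \cdots \wedge z^{\beta_k,i_k} \otimes v$ has $\mfh$-weight $-\sum_j \beta_j + \mathrm{wt}(v)$, and all weights of $V = \mcL(\lambda) \otimes S^*\hat{\mfn}^* \otimes \C_{-\mu}$ lie in $\sum_{i \in Z} k_i\alpha_i - \sum_i \Z_{\geq 0}\alpha_i$. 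The weight-zero condition therefore forces
\begin{equation*}
    \sum_j \beta_j + \eta = \sum_{i \in Z} k_i\alpha_i
\end{equation*}
for some $\eta \in \sum_i \Z_{\geq 0}\alpha_i$. Non-negativity in the root lattice forces each $\beta_j$ and $\eta$ to lie in $\Z_{\geq 0}[Z]$. Consequently each $\beta_j$ is a positive root of $\mfg(A_Z)$, the $\mcL(\lambda)$-component of $v$ lies in the $\mfg(A_Z)$-submodule $\mcL_Z \subset \mcL(\lambda)$ generated by a highest-weight vector (which, by Kac's integrability theorem, is the irreducible integrable $\mfg(A_Z)$-module of highest weight $\lambda|_{\mfh_Z}$), and the $S^*\hat{\mfn}^*$-component lies in $S^*\hat{\mfn}_Z^*$. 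The whole weight-zero chain complex therefore sits inside $\bigwedge^* \hat{\mfn}_Z^* \otimes V_Z$, where $V_Z = \mcL_Z \otimes S^*\hat{\mfn}_Z^* \otimes \C_{-\mu}$, and by the same weight argument $\deltabar$ restricts to this subspace as the Chevalley--Eilenberg differential of $\hat{\mfn}_Z$ with values in $V_Z$ (any term involving $z_k \notin \mfn_Z$ would produce a wedge factor with weight outside $\Z[Z]$, which cannot be weight zero).

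Because weights on this subcomplex lie in $\Z[Z]$, on which $\{\alpha_i : i \in Z\}$ are linearly independent (as simple roots of $\mfg(A_Z)$), the $\mfh$-invariance condition agrees with $\mfh_Z$-invariance for any Cartan $\mfh_Z$ of $\mfg(A_Z)$ containing the coroots $\alpha_i^\vee$, $i \in Z$. Thus the subcomplex computes $H^*_{cts}(\hat{\mfb}_Z, \mfh_Z; V_Z)$. The restrictions $\lambda|_{\mfh_Z}$ and $\mu|_{\mfh_Z}$ inherit real-valued dominance from $\lambda, \mu$ and $\lambda|_{\mfh_Z}$ remains integrable, so the direct-sum version of Lemma~\ref{L:cohvanish} applied to $\mfg(A_Z)$ yields $H^q_{cts}(\hat{\mfb}_Z, \mfh_Z; V_Z) = 0$ for $q > 0$, completing the proof. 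The main obstacle is the bookkeeping needed to make the reduction of chain complexes rigorous---in particular confirming that $\mcL_Z$ is truly the irreducible integrable $\mfg(A_Z)$-module called for by the extended lemma, that $\deltabar$ restricts as claimed, and that the Cartan $\mfh_Z$ of $\mfg(A_Z)$ is compatible with our setup (including a real form on which $\lambda|_{\mfh_Z}$ is real-valued).
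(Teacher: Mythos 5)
There is a genuine gap: what you have written is (at best) a proof of the cohomology vanishing statement, i.e.\ essentially a re-proof of Proposition~\ref{P:affsupp} by a different route, but the corollary you were asked to prove is the polynomial identity $m^{\lambda}_{\mu}(q) = {}^{deg} P^{\lambda}_{\mu}(q)$, and your argument never produces it. In the paper the corollary follows by feeding the vanishing of Proposition~\ref{P:affsupp} into the machinery of Sections~\ref{S:bryldef} and~\ref{S:reduction} (everything except Proposition~\ref{P:heisenberg}): first, the lemma of Section~\ref{S:reduction} uses the filtration of $\C[\mcU]$ by polynomial degree, with $\gr \C[\mcU] = S^*\hat{\mfn}^*$, together with the vanishing of $H^1_{cts}$ to identify $H^0_{cts}(\hat{\mfb},\mfh;\mcL(\lambda)\otimes\C_{-\mu}\otimes S^*\hat{\mfn}^*)$, graded by symmetric degree, with the associated graded of the degree filtration on $\mcL(\lambda)_{\mu}\iso(\mcL(\lambda)\otimes\C_{-\mu}\otimes\C[\mcU])^{\mcB}$; second, the Euler-characteristic computation from the proof of Theorem~\ref{T:brylpoincare}, via the Weyl--Kac character formula, identifies $\sum_p \chi(\hat{\mfb},\mfh;V^p)q^p$ with $m^{\lambda}_{\mu}(q)$, and the higher vanishing converts the Euler characteristic into $\dim H^0$. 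Both of these steps are absent from your write-up: you never mention the degree filtration, ${}^{deg}P^{\lambda}_{\mu}$, or the character formula, and you declare the proof complete once $H^{q}_{cts}=0$ for $q>0$. Since the vanishing is already supplied by Proposition~\ref{P:affsupp}, the bridge you omitted is precisely the content of the corollary.

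On the part you did prove: your weight-support reduction is a sensible alternative to the paper's argument for Proposition~\ref{P:affsupp}. The paper keeps the full module $\mcL(\lambda)$ and works with the Nakano/Laplacian decomposition, observing that the possibly indefinite Cartan contribution to $\deg+\curv_{S^*}$ restricts, on the $\mfh$-invariant subspace supported in $\Z[Z]$, to the corresponding operator for $\mfg(A_Z)$; you instead cut the whole complex down to $\bigwedge^*\hat{\mfn}_Z^*\otimes\mcL_Z\otimes S^*\hat{\mfn}_Z^*\otimes\C_{-\mu}$ and cite the direct-sum version of Lemma~\ref{L:cohvanish}. That reduction is essentially sound (terms of $\deltabar$ involving $z_k\notin\mfn_Z$ annihilate the $Z$-supported subspace because they would move weights outside the weight support of $V$), but note two loose ends you flagged only in passing: the Cartan $\mfh_Z$ must be chosen large enough inside $\mfh$ that the simple roots in $Z$ remain linearly independent on it (the span of the coroots is not enough for an affine block), and the direct-sum version of Lemma~\ref{L:cohvanish} requires positive level on each affine block, so the case where $\mu$ (hence, by dominance, also $\lambda$) restricts to level zero on an affine component of $A_Z$ needs a separate argument. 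None of this, however, repairs the main problem above: the corollary's conclusion is never derived.
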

The conclusions of Theorem \ref{T:brylkm} hold similarly, with the Brylinski
filtration replaced by the degree filtration. 

The requirement in Proposition \ref{P:affsupp} and Corollary \ref{C:affsupp}
that $\lambda - \mu$ have affine support is a technical assumption used to
prove the positive-definiteness of the $\deg + \curv_{S^*}$ term in the
Laplacian. It is unclear to the author whether or not this hypothesis
can be dropped. 

\begin{proof}[Proof of Proposition \ref{P:affsupp}]
    We continue to use the notation of Section \ref{S:cohvanish}.  For
    instance, $V = S^* \hat{\mfn}^* \otimes \mcL(\lambda) \otimes \C_{-\mu}$.
    Recall that $\boxbar = \square + \deg + \curv_V$, and $\curv_V =
    \curv_{\mcL(\lambda)} + \curv_{\C_{-\mu}} + \curv_{S^*}$. The operators
    $\square$, $\curv_{\mcL(\lambda)}$, and $\curv_{\C_{-\mu}}$ are positive
    semi-definite as before, while
    \begin{equation*}
        \deg + \curv_{S^*} = \sum_{k \geq 1} d_R(\ad^t(x_k) J)^* d_R(\ad^t(x_k) J) 
            + \sum_{i} d_L(\ad^t(u^i)) d_R(\ad^t(u_i) J),
    \end{equation*}
    where $\{x_k\}$ is a basis for $\mfn$ orthonormal in the contragradient
    metric, and $\{u_i\}$ and $\{u^i\}$ are dual bases for $\mfh$. The first
    summand in this equation is positive semi-definite, but the second is not
    if there are roots with $\langle \alpha, \alpha \rangle < 0$. Indeed,  
    writing
    \begin{multline}\label{E:switch}
        \sum_{i}  d_L(\ad^t(u^i)) d_R(\ad^t(u_i) J) = \\
             \sum_{i} \Switch(\ad^t(u^i) J, \ad^t(u_i)) + \sum_i (\ad^t(u^i) \ad^t(u_i) J)^{\wedge},
    \end{multline}
    we see that the first summand in Equation (\ref{E:switch}) is the
    second order operator defined by 
    \begin{equation*}
         x \otimes y \mapsto \frac{ \langle \alpha, \beta \rangle }{2 \langle \rho,\alpha \rangle}
            y \otimes x, x \in \mfg_{\alpha}^*, y \in \mfg_{\beta}^*, 
    \end{equation*}
    while the second summand in Equation (\ref{E:switch}) is the derivation of
    $\bigwedge^* \hat{\mfn}^*$ induced by the map
    \begin{equation*}
        x \mapsto \frac{ \langle \alpha, \alpha \rangle}{2 \langle \rho, \alpha \rangle} x, 
            x \in \mfg_{\alpha}^*
    \end{equation*}
    on $\hat{\mfn}^*$. 
        
    Let $\mfg(A_Z)$ be the corresponding Kac-Moody subalgebra of $\mfg$, and
    let $\mfn_Z$ be the standard nilpotent. $\mfg(A_Z)$ has a Cartan subalgebra
    $\mfh_Z \subset \mfh$, and the real-valued non-degenerate symmetric
    invariant form on $\mfg$ restricts to such a form on $\mfg(A_Z)$. Any
    $\mfh$-invariant element of $\bigwedge^* \hat{\mfn}^* \otimes V$ must
    belong to $\bigwedge^* \hat{\mfn}_Z^* \otimes S^* \hat{\mfn}_Z^* \otimes
    \mcL(\lambda) \otimes \C_{-\mu}$. We claim that the operator $\sum_{i}
    d_L(\ad^t(u^i)) d_R(\ad^t(u_i) J)$ on $\bigwedge^* \hat{\mfn}^* \otimes S^*
    \hat{\mfn}^*$ restricts on $\bigwedge^* \hat{\mfn}_Z^* \otimes S^*
    \hat{\mfn}_Z^*$ to the operator $\sum_{i} d_L(\ad^t(v^i)) d_R(\ad^t(v_i)
    J)$, where $\{v_i\}$ and $\{v^i\}$ are dual bases of $\mfh_Z$. To verify
    this claim, note that a choice of symmetric invariant form corresponds to a
    choice of a diagonal matrix $D$ with positive diagonal entries, such that
    $D A$ is a symmetric matrix. If $x \in \mfh^*$ the invariant form satisfies
    $\langle x, \alpha_i \rangle = D_{ii} x(\alpha_i^{\vee})$. The operator in
    Equation (\ref{E:switch}) thus depends only on $A$ and $D$; the claim
    follows from the observation that the action of the operator on
    $\bigwedge^* \hat{\mfn}_Z^* \otimes S^* \hat{\mfn}_Z^*$ depends only on
    $A_Z$ and $D_Z$. 

    Now suppose $A_Z$ is a direct sum of indecomposables of finite and affine
    type. The operator $\sum_{i} d_L(\ad^t(v^i)) d_R(\ad^t(v_i) J)$ decomposes
    into a summand for each component, each of which is positive semi-definite
    as in the proof of Lemma \ref{L:cohvanish}. We finish as in the proof of
    Lemma \ref{L:cohvanish}, but taking $Y = \{ \alpha_i \in Z : \mu(\alpha_i)
    = 0\}$. 
\end{proof}

\bibliographystyle{plain}

\end{document}